\def\myMarginsize{1in}
\def\myStretch{1.1}

\documentclass[10pt]{article}

\usepackage{bbm}
\usepackage{caption}
\usepackage{rotating}
\usepackage[maxfloats=100]{morefloats}
\usepackage{listings}
\usepackage{booktabs}
\usepackage{xcolor}
\usepackage[title]{appendix}
\usepackage{makecell}
\usepackage{multirow}
\usepackage[symbol]{footmisc}
\usepackage{longtable}
\usepackage{subcaption}
\usepackage[numbers]{natbib}
\usepackage{geometry}
\usepackage{verbatim}
\usepackage{graphicx}
\usepackage{enumitem}
\usepackage{comment}
\usepackage{subfiles}
\usepackage{todonotes}
\usepackage{tikz-qtree,tikz-qtree-compat}
\usepackage{scrextend}
\usepackage{framed}
\usepackage{placeins}
\usepackage{setspace}
\usepackage[ruled,vlined]{algorithm2e}
\usepackage{mathtools,collcell,eqparbox}
\usepackage{multicol}
\usepackage{pgfplots}
\usepackage{tikz}
\usepackage{courier}
\usepackage{url}
\usepackage{color}
\usepackage{authblk}
\usepackage{amssymb}
\usepackage{amsthm}
\usepackage{dsfont}
\usepackage{amsmath}
\usepackage[pagebackref = true]{hyperref}

\hypersetup{
  colorlinks = true,
  linkcolor = teal,
  anchorcolor = teal,
  citecolor = teal,
  filecolor = teal,
  urlcolor = teal
}

\pgfplotsset{compat=1.15}
\numberwithin{equation}{section}

\makeatletter
\newcounter{BMatrix}

\newcommand{\setmaxwd}[1]{%
  \eqmakebox[BM-\theBMatrix][\BMalign]{$#1$}%
}
\MHInternalSyntaxOn

\MHInternalSyntaxOff
\makeatother

\newtheorem{theorem}{Theorem}
\newtheorem{assumption}{Assumption}

\newtheorem{proposition}{Proposition}[section]

\newtheorem{lemma}{Lemma}
\theoremstyle{definition}

\newtheorem{example}{Example}
\theoremstyle{remark}
\newtheorem{remark}{Remark}
\newtheorem*{remark*}{Remark}

\newcommand{\A}{\mathbb{A}}
\newcommand{\W}{\mathbb{W}}
\newcommand{\R}{\mathbb{R}}

\newcommand{\X}{\mathbb{X}}

\newcommand{\ra}{\rightarrow}

\newcommand{\ua}{\uparrow}

\newcommand{\cd}{\cdot}

\newcommand{\law}{\mathrm{Law}}
\newcommand{\mrm}[1]{\mathrm{#1}}

\newcommand{\cA}{\mathcal{A}}
\newcommand{\cB}{\mathcal{B}}

\newcommand{\cE}{\mathcal{E}}

\newcommand{\cP}{\mathcal{P}}

\newcommand{\cT}{\mathcal{T}}
\newcommand{\cU}{\mathcal{U}}

\newcommand{\cW}{\mathcal{W}}
\newcommand{\cX}{\mathcal{X}}

\newcommand{\set}[1]{\left\{{#1}\right\}}
\newcommand{\setcond}[2]{\left\{ #1 : #2 \right\}}

\newcommand{\norm}[1]{\left\|#1\right\|}

\newcommand{\abs}[1]{\left|#1\right|}
\newcommand{\sqbk}[1]{\left[ #1 \right]}

\newcommand{\crbk}[1]{\left( #1 \right)}

\newcommand{\Wass}{\mathsf{W}}

\newcommand{\Bkap}{\mathcal{B}_{\kappa}}

\title{Lipschitz Regularity in \\ Wasserstein Robust Stochastic Optimal Control}
\author[1]{Shengbo Wang}
\author[2]{Jose Blanchet}
\affil[1]{Daniel J. Epstein Department of Industrial and Systems Engineering\\ University of Southern California}
\affil[2]{Management Science and Engineering\\ Stanford University}
\date{June 2026}
\begin{document}

\maketitle

\begin{abstract}
Robust Markov decision processes provide a principled framework for protecting sequential decision-making against transition-law misspecification and have attracted substantial recent research interest. As in non-robust stochastic optimal control, an important question is whether the robust value function is sufficiently regular for approximation and learning. This paper studies Lipschitz regularity of optimal value functions for Wasserstein robust stochastic optimal control on possibly unbounded Polish state spaces under an infinite-horizon discounted reward criterion. We consider two robustness formulations: a kernel-robust model, in which the adversary perturbs the next-state distribution within a Wasserstein ball around a nominal transition kernel, and a noise-robust model, in which the adversary perturbs the driving noise of the state transition dynamics. In the non-robust setting, Lipschitz rewards and Lipschitz transition dynamics do not, in general, imply a Lipschitz value function. In contrast, we show that in these Wasserstein robust formulations, Lipschitz assumptions on the model primitives yield Lipschitz robust value functions. Thus, Wasserstein robustness not only protects against misspecification but also regularizes the Bellman fixed point, providing stability relevant to discretization, value-function approximation, estimation, and learning.
\end{abstract}

\section{Introduction}
Markov decision processes (MDPs) and stochastic control provide a canonical framework for sequential decision-making under uncertainty. In the classical formulation, the transition law is treated as part of the model specification. Under standard assumptions, dynamic programming reduces the control problem to solving a Bellman equation that characterizes the optimal values and policies \citep{bertsekas1978stochastic}. In data-driven or simulation-based settings, however, the transition law is rarely known exactly. It may be estimated from limited data, approximated by a simulator, or used as a simplified representation of the true dynamics. Consequently, potential model misspecification can harm the performance of the resulting policy. A risk-averse response to such misspecification is to evaluate policies against a family of plausible models rather than a single nominal model, leading to robust MDP formulations \citep{iyengar2005robust,wiesemann2013robust,wang2025foundation}.

This paper studies an important regularity question for general state space Wasserstein robust stochastic optimal control. In applications of stochastic optimal control, the optimal value function is of central interest beyond certifying Markov optimal policies. It drives many core aspects of control and learning algorithm design: it is the object approximated in discretization, fitted value iteration, model-based reinforcement learning, and policy improvement. The regularity of the value function is therefore operationally important, because it usually controls how state approximation, statistical error, and model error propagate through dynamic programming \citep{hinderer2005lipschitz,dufour2012approximation,asadi2018lipschitz,wang2026fast_convergenece}.  A natural first guess is that Lipschitz rewards and Lipschitz dynamics should imply a Lipschitz value function.  The following simple example shows that this implication is false.

\begin{example}\label{exmpl:holder}
Let the state space $\X=[0,1]$ and the action space $\A=\{a_0\}$. Fix $\alpha\in(0,1)$ and set $\gamma=2^{-\alpha}$. Consider the system dynamics induced by the transition function $f(x)=\min\{2x,1\}$; i.e. the MDP has transition kernel $ P(\cdot| x,a_0)=\delta_{f(x)}$. Also, we use the reward
\[
r(x):=x^\alpha-\gamma f(x)^\alpha
    =
    \begin{cases}
        0, & 0\le x\le \frac12,\\
        x^\alpha-2^{-\alpha}, & \frac12\le x\le 1.
    \end{cases}
\]
Then $r\ge 0$ is Lipschitz on $[0,1]$, and $f$ is $2$-Lipschitz. 

Since there is only one action, the classical dynamic programming theory \citep{bertsekas1978stochastic} implies that the optimal value function $V^*$ is the unique bounded solution of $V^*(x)=r(x)+\gamma V^*(f(x))$.
By construction, $x^\alpha
    =
    r(x)+\gamma f(x)^\alpha,$
so $V^*(x)=x^\alpha .$
Therefore, $V^*$ is $\alpha$-H\"older but not Lipschitz. 
\end{example}

The example isolates a structural obstruction. The loss of Lipschitz regularity is not caused by stochasticity, multiple actions, or non-compactness. Rather, it arises from the Bellman recursion itself: the value function is composed with the dynamics, and expansion in the dynamics (cf.\ the $2$-Lipschitz part of $f$ in Example~\ref{exmpl:holder}) can destroy Lipschitz continuity of the fixed point.

We show that Wasserstein distributional robustness changes this conclusion. We study infinite-horizon control on possibly unbounded Polish state spaces, with a discounted objective and non-negative rewards. For such problems, the optimal robust value functions are typically characterized as minimal non-negative solutions of robust Bellman equations \citep{iyengar2005robust,wang2025foundation}. We analyze two adversarial formulations.
\begin{itemize}[leftmargin = *]
    \item In the kernel-robust model, the adversary directly perturbs the next-state distribution. Given a nominal transition kernel $P_0$, the ambiguity set is $\cP_\delta(x,a):=\set{\nu:\Wass_p(\nu,P_0(\cdot\mid x,a))\leq\delta}.$ Under SA-rectangularity, the robust optimal value function is the fixed point of the Bellman operator in \eqref{eqn:kernel_operator}; this is the robust dynamic programming principle of \citet{iyengar2005robust} for Wasserstein ambiguity sets.
\item In the noise-robust model, we consider controlled stochastic dynamics of the form $X_{t+1}=f(X_t,A_t,W_{t+1})$. Under the nominal model, the driving noise sequence $\set{W_t:t\geq 1}$ is i.i.d.\ with common law $\psi_0$. The adversary perturbs the marginal law of the driving noise within the Wasserstein ball $\cU_{\delta}:=\set{\psi:\Wass_p(\psi,\psi_0)\leq\delta}.$ Depending on whether the adversary observes the realized current action of the controller, this gives rise to the current-action-aware (CAA) and current-action-unaware (CAU) formulations in \citet{wang2025statistical}. The corresponding Bellman operators are given in \eqref{eqn:caa_operator} and \eqref{eqn:cau_operator}.
    
\end{itemize}

The main contribution is to show that, for every positive Wasserstein radius, the Wasserstein robust optimal value functions under these formulations are locally Lipschitz under natural Lipschitz and drift conditions, even though the corresponding non-robust value function (i.e., when $\delta = 0$) may fail to be Lipschitz. This result gives a complementary interpretation of Wasserstein robust control. While Wasserstein ambiguity is typically introduced to protect against model misspecification risk \citep{blanchet2019modelrisk,esfahani2018data,wang2025statistical}, in the dynamic setting, it also restores the Lipschitz regularity of the Bellman fixed point. This additional regularization effect is consequential for learning and approximation, as it provides additional stability and smoothness to control discretization, estimation, and value-function approximation errors.

We remark that this regularization effect is not a generic consequence of robustness, but specific to Wasserstein ambiguity whose dual representation involves a spatial inf-convolution; see Proposition~\ref{prop:wasserstein_lower_duality}. In particular, for KL-divergence-based ambiguity set $\{\nu:D_{\mrm{KL}}(\nu\|P_0(\cdot| x,a))\le\delta\}$, deterministic dynamics leave Example~\ref{exmpl:holder} unchanged: if $P_0(\cdot| x,a)=\delta_{f(x)}$, the adversary must choose $\nu=\delta_{f(x)}$. Thus, the robust Bellman operator coincides with the non-robust one in that example, and the value function remains non-Lipschitz.

\subsection{Literature Review}

\paragraph{Robust and Wasserstein robust Markov decision processes.}
Robust MDPs replace a nominal transition model by an uncertainty set and use dynamic programming to compute worst-case values.  \citet{iyengar2005robust} and \citet{nilim2005robust} established robust dynamic programming under SA-rectangular transition uncertainty, \citet{wiesemann2013robust} developed tractable uncertainty sets with probabilistic guarantees.  More closely related to our formulation, \citet{yang2017convex} studied Wasserstein distributionally robust MDPs through convex reformulations, \citet{yang2021wasserstein} and \citet{wang2025statistical} developed data-driven Wasserstein robust stochastic control, and \citet{grandclement2021first} proposed first-order methods for Wasserstein robust MDPs. 

\paragraph{Wasserstein robustness and sensitivity.}
Wasserstein ambiguity uses optimal-transport geometry to compare probability laws \citep{villani2009optimal}.  In static distributionally robust optimization, \citet{esfahani2018data}, \citet{gao2023distributionally}, and \citet{blanchet2019modelrisk} develop duality, tractability, and performance guarantees for Wasserstein ambiguity sets.  Sensitivity of Wasserstein robust objectives is studied by \citet{bartl2021sensitivity}.  Our analysis uses related duality and sensitivity ideas.

\paragraph{Regularity and approximation of value functions.}
Regularity and stability of value functions are standard inquiries in dynamic programming.  \citet{muller1997value} and \citet{zhou2026robustnessmodelapproximationmodel} study the dependence of value functions on transition probabilities and rewards, \citet{hinderer2005lipschitz} gives conditions for Lipschitz continuity in MDPs, and \citet{dufour2012approximation} studies approximation for general state spaces.  In learning-based contexts, smoothness of the optimal value function has been used to control value-estimation error and policy regret in model-based reinforcement learning and data-driven control \citep{asadi2018lipschitz,wang2026fast_convergenece,wang2026QML}, leading to efficient algorithms and learning guarantees.

\section{Wasserstein Robust Control on Unbounded Spaces}
\label{section:setup}

We consider Polish state, action, and noise spaces $\X,\ \A,\ \W$ with metrics $d_\X,d_\A,d_\W$ and Borel $\sigma$-algebras $\cX,\ \cA,\ \cW$, respectively. In addition, we assume throughout that $(\X,d_\X)$ and $(\W,d_\W)$ are proper Polish spaces, so that closed bounded subsets of $\X$ and $\W$ are compact.  This ensures the compactness of relevant Wasserstein balls in the topology of weak convergence. We let $m\set{\cU\ra\cB(\R)}$ denote measurable functions from $(\mathbb{U},\cU)$ to $(\R,\cB(\R))$.

We fix a reference state $x_0\in\X$, a reference noise point $w_0\in\W$, and power $p\in[1,\infty)$ for Wasserstein distances throughout the paper. For $\mu\in\cP(\cX)$, write $\mu\in\cP_p(\cX)$ if
$\int_\X d_\X(x,x_0)^p\mu(dx)<\infty.$ The class $\cP_p(\cW)$ is defined analogously. Then, for $\mu,\nu\in\cP_p(\cX)$, define the Wasserstein-$p$ distance as
\[
    \Wass_p(\mu,\nu)
    :=
    \inf_{\pi\in\Pi(\mu,\nu)}
    \left(\int_{\X\times\X}d_\X(x,y)^p\pi(dx,dy)\right)^{1/p},
\]
and the same notation is used on $\W$ with $d_\W$ in place of $d_\X$.

We consider the $\gamma$-discounted setting with $\gamma\in(0,1)$ and non-negative reward functions $r$.  Hence, by a verification argument, the infinite-horizon robust value function is usually the minimal non-negative solution of the robust Bellman equation. In this paper, we consider two main types of Wasserstein distributionally robust stochastic optimal control formulations, leading to three versions of the robust Bellman equation. 

\paragraph{Kernel-robust model \citep{iyengar2005robust}.} We consider a Wasserstein ambiguity model directly imposed on the transition kernel. Specifically, the adversary chooses the transition distribution of the next state from the ambiguity set
\[
    \cP_\delta(x,a):=
    \setcond{\nu\in\cP_p(\cX)}{\Wass_p(\nu,P_0(\cd|x,a))\le\delta}.
\]
In this setting, we consider a reward function $r:\X\times\A\times\X\ra \R_+$. The relevant Bellman operator is
\begin{equation}
\label{eqn:kernel_operator}
\cT_\delta v(x)
    :=
    \sup_{a\in\A}
    \inf_{\nu\in\cP_\delta(x,a)}
    \int_\X \sqbk{r(x,a,y)+\gamma v(y)}\nu(dy).
\end{equation}
The non-robust Bellman operator is obtained by setting $\delta=0$.
\paragraph{Noise-robust model \citep{wang2025statistical}.} We also consider a state space dynamics formulation of a controlled Markov chain driven by the dynamics $f$ and an exogenous noise process $\set{W_t\in\W:t\ge 1}$. In this setting, we allow the reward to explicitly depend on the noise; i.e., $r:\X\times\A\times\W\ra \R_+$. The state transition is given by $X_{t+1}=f(X_t,A_t,W_{t+1}).$
Under the nominal model, the noise sequence is i.i.d. with common law $\psi_0\in\cP_p(\cW)$.  The adversary can perturb the marginal distributions of $\set{W_t:t\ge 1}$ within the ambiguity set
\[
    \cU_\delta:=\set{\psi\in\cP_p(\cW):\Wass_p(\psi,\psi_0)\le\delta}.
\]
Under this model, the adversary's observability of the current action will affect the dynamic programming equation and hence the stationary robust optimal policies.

\begin{itemize}[leftmargin = *]
    \item If the adversary can observe the realized current action, then the robust optimal value function is given by the fixed point of the current-action-aware (CAA) operator
\begin{equation}
\label{eqn:caa_operator}
    \cT_\delta^{\mrm{CAA}} v(x)
    :=
    \sup_{a\in\A}
    \inf_{\psi\in\cU_\delta}
    \int_\W \sqbk{r(x,a,w)+\gamma v(f(x,a,w))}\psi(dw).
\end{equation}

\item On the other hand, if the adversary cannot observe the realized current action, then we consider the current-action-unaware (CAU) operator
\begin{equation}
\label{eqn:cau_operator}
    \cT_\delta^{\mrm{CAU}} v(x)
    :=
    \sup_{\phi\in\cP(\cA)}
    \inf_{\psi\in\cU_\delta}
    \int_\W\int_\A
    \sqbk{r(x,a,w)+\gamma v(f(x,a,w))}
    \phi(da)\psi(dw).
\end{equation}
In this case, the optimal policy is usually not deterministic. 
\end{itemize}

Before we present our main results, we first establish some technical definitions and tools.

\subsection{Wasserstein Duality, Approximation, and Sensitivity}

Our main strategy for proving the existence and regularity of the minimal non-negative fixed points of \eqref{eqn:kernel_operator}, \eqref{eqn:caa_operator}, and \eqref{eqn:cau_operator} is based on a monotone Picard iteration argument, which approximates the solution from below. To control the regularity of the approximating iterates, we use the dual form of the Wasserstein robust functional. For these purposes, we need the following proposition. 

\begin{proposition}
\label{prop:wasserstein_lower_duality}
Let $(E,\cE)$ denote either $(\X,\cX)$ or $(\W,\cW)$.  Let $\mu\in\cP_p(\cE)$, $\delta>0$, and $h:E\to[0, +\infty]$ be lower semicontinuous with $\int hd\mu<\infty$.  Define $h^\lambda(z):=\inf_{y\in E}\set{h(y)+\lambda d(z,y)^p}$ for $\lambda\ge0.$
Then
\begin{equation}
\label{eqn:bm_metric_dual}
    \inf_{\nu\in\cP_p(E):\Wass_p(\nu,\mu)\le\delta}
    \int hd\nu
    =
    \sup_{\lambda\ge0}
    \left\{\int h^\lambda(z)\mu(dz)-\lambda\delta^p\right\}.
\end{equation}
Moreover, if $\set{h_n:n\ge 1}$ are non-negative lower semicontinuous with $h_n\ua h$ pointwise, then
\begin{equation}
\label{eqn:robust_monotone_continuity}
    \inf_{\nu:\Wass_p(\nu,\mu)\le\delta}\int h_nd\nu
    \uparrow
    \inf_{\nu:\Wass_p(\nu,\mu)\le\delta}\int hd\nu.
\end{equation}
\end{proposition}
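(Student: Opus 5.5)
We first prove the duality \eqref{eqn:bm_metric_dual} and then derive the monotone continuity \eqref{eqn:robust_monotone_continuity} from it.

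\textbf{Weak duality.} Fix $\nu$ with $\Wass_p(\nu,\mu)\le\delta$ and let $\pi$ be an optimal coupling of $(\mu,\nu)$; such a coupling exists on a Polish space. For every $\lambda\ge0$ and all $z,y$ we have $h(y)\ge h^\lambda(z)-\lambda d(z,y)^p$. Integrating against $\pi$ gives
\[
\int h\,d\nu\ \ge\ \int h^\lambda\,d\mu-\lambda\int d^p\,d\pi\ \ge\ \int h^\lambda\,d\mu-\lambda\delta^p .
\]
Here $h^\lambda$ is measurable, since it is upper semianalytic and in fact universally measurable. It is also bounded between $0$ and $h$, so the integrals make sense.

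\textbf{Strong duality.} Strong duality is the known Wasserstein DRO duality for lower semicontinuous integrands; see \citet{blanchet2019modelrisk} and \citet{gao2023distributionally}. I would invoke it directly and check its hypotheses. We need $h\ge0$ l.s.c., hence bounded below, and $\int h\,d\mu<\infty$, which makes the primal value finite because $\nu=\mu$ is feasible. We also need $\delta>0$, which is the Slater-type condition. If I had to give a self-contained route, I would write the primal as an infinite-dimensional linear program over couplings $\pi$ with first marginal $\mu$ and constraint $\int d^p\,d\pi\le\delta^p$. Lagrangianizing only the transport constraint gives
\[
\inf_\pi\sup_{\lambda}=\sup_\lambda\inf_\pi .
\]
The inner infimum over $\pi$ with fixed first marginal equals $\int h^\lambda d\mu$ by measurable selection of near-minimizers $y(z)$. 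The exchange of $\inf$ and $\sup$ is the \textbf{main obstacle}. I would justify it with a convex-analysis argument: the value function $\delta^p\mapsto\inf\{\cdots\}$ is convex and nonincreasing in the budget, and it is finite on a neighborhood of $\delta^p>0$. It therefore has a subgradient at $\delta^p$, which provides the optimal $\lambda^*\ge0$.

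\textbf{Monotone continuity.} Write $I(g)$ for the robust infimum. Monotonicity of $h_n$ gives $I(h_n)\uparrow L\le I(h)$. For the reverse inequality, the paper's properness assumption makes the ball $\{\nu:\Wass_p(\nu,\mu)\le\delta\}$ weakly compact. The map $\nu\mapsto\int g\,d\nu$ is weakly l.s.c. for nonnegative l.s.c. $g$. So for each $n$ there is a minimizer $\nu_n$ with $\int h_n\,d\nu_n=I(h_n)$. Extract a weakly convergent subsequence $\nu_n\to\nu^*$, and note that $\nu^*$ lies in the ball because $\Wass_p$ is weakly l.s.c. For fixed $m$ and $n\ge m$ we have
\[
\int h_m\,d\nu_n\le\int h_n\,d\nu_n=I(h_n)\le L .
\]
Passing to the limit in $n$ with the l.s.c. of $h_m$ gives $\int h_m\,d\nu^*\le L$. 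Monotone convergence in $m$ then yields $\int h\,d\nu^*\le L$, so $I(h)\le L$.

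If weak compactness needs care, I would use an alternative dual route. Since $h_n^\lambda\uparrow h^\lambda$ pointwise, which is a standard inf-convolution fact using l.s.c. and local compactness, monotone convergence gives
\[
\sup_\lambda\lim_n\Big[\int h_n^\lambda\,d\mu-\lambda\delta^p\Big]=I(h).
\]
Interchanging the two monotone suprema then yields the claim through \eqref{eqn:bm_metric_dual}.
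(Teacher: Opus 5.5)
Your proposal is correct and essentially follows the paper: the duality \eqref{eqn:bm_metric_dual} comes from \citet{blanchet2019modelrisk}, with your weak-duality check and convex-perturbation sketch as optional extras, and \eqref{eqn:robust_monotone_continuity} is proved by the same argument---weak compactness of the Wasserstein ball on a proper space, Portmanteau for the l.s.c.\ $h_m$, then monotone convergence in $m$---using exact minimizers instead of the paper's $n^{-1}$-minimizers. Two harmless slips: $h^\lambda$ is an infimum, so it is lower (not upper) semianalytic, and it is in fact l.s.c.\ here; and in your backup dual route $h_n^\lambda\uparrow h^\lambda$ can fail at $\lambda=0$ (e.g.\ $h_n=\mathbf{1}_{(-n,n)}\uparrow h\equiv1$ on $\R$), which does not matter because $h^\lambda\ge\inf h$ makes the supremum over $\lambda>0$ equal to the supremum over $\lambda\ge0$.
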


\begin{proof}
The dual representation \eqref{eqn:bm_metric_dual} follows from the seminal work on metric-space optimal-transport duality by \citet{blanchet2019modelrisk}; it is obtained by applying their supremum version to $-h$. Since $h\geq 0$ and $\int h\,d\mu<\infty$, the right-hand side is well defined as $h^\lambda$ is measureable (see the comment after Theorem 1 in \citet{blanchet2019modelrisk}) and $0\leq h^\lambda\leq h$.

To show \eqref{eqn:robust_monotone_continuity}, let
\[
    L:=\lim_{n\ra\infty} \inf_{\nu:\Wass_p(\nu,\mu)\le\delta}\int h_nd\nu, \quad\text{and}\quad U = \inf_{\nu:\Wass_p(\nu,\mu)\le\delta}\int hd\nu. 
\]
The first limit exists due to monotonicity. Note that monotonicity also gives $L\leq U$. So it remains to show that $L\ge U$. Choose $\nu_n$ in the Wasserstein ball such that
\[
    \int h_nd\nu_n\le \inf_{\nu:\Wass_p(\nu,\mu)\le\delta}\int h_nd\nu+n^{-1}.
\]
The Wasserstein ball of measures on a proper Polish space is weakly compact. Therefore, after passing to a subsequence which we also denote by $\set{\nu_n:n\ge 1}$, $\nu_n\Rightarrow\nu^*$ for some feasible $\nu^*$.  

Next, for each fixed $k$, the lower semicontinuity, Portmanteau theorem, monotonicity, and the choice of $\nu_n$ imply
\[
    \int h_kd\nu^*
    \le \liminf_{n\to\infty}\int h_kd\nu_n
    \le \liminf_{n\to\infty}\int h_nd\nu_n
    \le L.
\]
Since $\nu^*$ is feasible, letting $k\to\infty$ and using monotone convergence yields $U \leq \int hd\nu^*\le L$. This completes the proof.
\end{proof}

In addition to duality and approximation from below, our main results leverage the following radius-sensitivity estimate to control the local growth of Wasserstein robust values. 

\begin{lemma}\label{lemma:radius_sensitivity}
    Let $(E,\cE)$ denote either $(\X,\cX)$ or $(\W,\cW)$. Let $h:E\to[0,+\infty]$ be lower semicontinuous with $\int hd\mu\le H$ and $\mu\in\cP_p(\cE)$. Then
\begin{equation}
\label{eqn:radius_sensitivity}
    0\le
     \inf_{\nu:\Wass_p(\nu,\mu)\le \delta}\int hd\nu- \inf_{\nu:\Wass_p(\nu,\mu)\le \delta+\rho}\int hd\nu
    \le
    p2^{p-1}H\frac{\rho}{\delta},
\end{equation}
for all $\delta>0$ and $\rho\ge0$. 
\end{lemma}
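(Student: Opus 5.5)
The lower bound in \eqref{eqn:radius_sensitivity} follows from monotonicity of the feasible sets. Since $\set{\nu:\Wass_p(\nu,\mu)\le\delta}\subseteq\set{\nu:\Wass_p(\nu,\mu)\le\delta+\rho}$, the infimum over the larger ball is no greater. Both infima are finite: $\mu$ is feasible for both balls and $h\ge0$, so each lies in $[0,H]$. The difference is therefore well defined.

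For the upper bound, I would avoid the dual representation of Proposition~\ref{prop:wasserstein_lower_duality} entirely and use a direct mixing construction.

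\textbf{Step 1 (near-optimal measure).} Fix $\varepsilon>0$ and pick $\nu'$ with $\Wass_p(\nu',\mu)\le\delta+\rho$ and
\[
\int h\,d\nu'\le \inf_{\nu:\Wass_p(\nu,\mu)\le\delta+\rho}\int h\,d\nu+\varepsilon .
\]

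\textbf{Step 2 (pull it back into the small ball).} Set $\theta:=1-\crbk{\delta/(\delta+\rho)}^p\in[0,1)$ and $\eta:=(1-\theta)\nu'+\theta\mu$. The key fact is that $\Wass_p^p(\cdot,\mu)$ is convex under mixtures. To see this, take an (near-)optimal coupling $\pi'$ of $(\nu',\mu)$ and the diagonal coupling $(\mathrm{id},\mathrm{id})_\#\mu$ of $(\mu,\mu)$. Their convex combination $(1-\theta)\pi'+\theta(\mathrm{id},\mathrm{id})_\#\mu$ couples $\eta$ with $\mu$, which gives
\[
\Wass_p^p(\eta,\mu)\le(1-\theta)\Wass_p^p(\nu',\mu)\le(1-\theta)(\delta+\rho)^p=\delta^p .
\]
Also $\eta\in\cP_p(\cE)$, since it is a mixture of two measures in $\cP_p(\cE)$. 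Hence $\eta$ is feasible for radius $\delta$.

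\textbf{Step 3 (compare values).} Using $\int h\,d\mu\le H$,
\[
\int h\,d\eta=(1-\theta)\int h\,d\nu'+\theta\int h\,d\mu\le \int h\,d\nu'+\theta H .
\]
It remains to bound $\theta$. Write $x=\rho/\delta$, so $\theta=1-(1+x)^{-p}$. Bernoulli's inequality gives $(1+x)^{-p}\ge 1-px$ for $p\ge1$ and $x\ge0$, by convexity of $t\mapsto t^{-p}$. Therefore $\theta\le p\rho/\delta\le p2^{p-1}\rho/\delta$. Combining the three steps and letting $\varepsilon\downarrow0$ yields \eqref{eqn:radius_sensitivity}.

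The only point requiring care is the convexity of $\Wass_p^p$ along mixtures with $\mu$ in Step 2, which the coupling argument above handles. Beyond that, the argument is routine. It gives the constant $p$, which is stronger than the stated $p2^{p-1}$.
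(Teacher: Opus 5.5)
Your argument is correct, and it follows a genuinely different route from the paper.

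The paper argues on the dual side:
\begin{itemize}
\item it uses the representation \eqref{eqn:bm_metric_dual} from Proposition~\ref{prop:wasserstein_lower_duality};
\item it notes that near-optimal multipliers satisfy $0\le\lambda\le H/\delta^p$ because $0\le h^\lambda\le h$;
\item it bounds the gap by $\sup_{0\le\lambda\le H/\delta^p}\lambda\{(\delta+\rho)^p-\delta^p\}$;
\item it then applies the mean value theorem with $\xi\le 2\delta$, which is the source of the factor $2^{p-1}$, and handles $\rho>\delta$ as a separate trivial case.
\end{itemize}

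You stay on the primal side. You pull a near-optimal measure for the larger ball back into the smaller ball by mixing it with the center. This requires only the convexity of $\Wass_p^p(\cdot,\mu)$ along mixtures with $\mu$, which your coupling argument establishes.

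Your route is more elementary and more general. It uses neither the duality theorem nor the lower semicontinuity of $h$; nonnegativity and measurability suffice. It needs no case distinction, because $\theta\le\min\{1,p\rho/\delta\}$ for every $\rho\ge0$. It also gives the sharper constant $p$ in place of $p2^{p-1}$, and this would carry over directly to \eqref{eqn:kernel_main_lip} and \eqref{eqn:noise_main_lip}. Finally, with your proof the dual formula \eqref{eqn:bm_metric_dual} is no longer needed for the main theorems. Their fixed-point steps use only the monotone-continuity statement \eqref{eqn:robust_monotone_continuity}, which is proved by weak compactness.

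What the paper's route offers in exchange is mainly interpretive. It locates the $1/\delta$ rate in a bound on the shadow price $\lambda$ of the radius constraint. This ties the estimate to the inf-convolution structure that the introduction credits for the regularization effect.
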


\begin{proof}
    
To show \eqref{eqn:radius_sensitivity}, we consider the dual in Proposition \ref{prop:wasserstein_lower_duality}
\[    \inf_{\nu:\Wass_p(\nu,\mu)\le\delta}
    \int hd\nu
    =
    \sup_{\lambda\ge0}
    \left\{\int h^\lambda(z)\mu(dz)-\lambda\delta^p\right\}.
\]
The dual multipliers may be restricted to
$0\le\lambda\le H/\delta^p$, because $0\le h^\lambda\le h$.  

We treat $\rho \in[0,\delta]$ and $\rho > \delta$ separately. If $0\le\rho\le\delta$, then by the mean value theorem $(\delta+\rho)^p-\delta^p= p(\xi)^{p-1}\rho\leq p2^{p-1} \delta^{p-1}\rho,$ where $\xi\in[\delta,\delta+\rho]$. Therefore, we have
\[
    \inf_{\nu:\Wass_p(\nu,\mu)\le \delta}\int hd\nu- \inf_{\nu:\Wass_p(\nu,\mu)\le \delta+\rho}\int hd\nu
    \le
    \sup_{0\le\lambda\le H/\delta^p}
    \lambda\{(\delta+\rho)^p-\delta^p\}
    \le
    p2^{p-1}H\frac{\rho}{\delta},
\]
where the second inequality follows from $\sup f-\sup g\leq \sup (f-g)$. On the other hand, if $\rho>\delta$,
$$\inf_{\nu:\Wass_p(\nu,\mu)\le \delta}\int hd\nu- \inf_{\nu:\Wass_p(\nu,\mu)\le \delta+\rho}\int hd\nu
    \le\inf_{\nu:\Wass_p(\nu,\mu)\le \delta}\int hd\nu\le H\le p2^{p-1}H\frac{\rho}{\delta}.$$
Hence \eqref{eqn:radius_sensitivity} holds. 
\end{proof}

\subsection{Weighted Spaces and Lipschitz Continuity}

Since we consider possibly unbounded spaces, we study Lipschitz regularity under $\kappa$-weighted norms, which allows for super-linear growth. Let $\kappa:\X\to[1,\infty)$ be a locally bounded weight; i.e. $\sup_{x\in K} \kappa (x) < \infty$ for every compact $K\subset\X$. It encodes the growth of rewards and value functions and the drift of the underlying controlled Markov chain. Common choices of weights include $\kappa(x) = 1+d_\X(x,x_0)^\rho$ for some $\rho\ge 1$, or $\kappa(x) = 1+\log (1+d_\X(x,x_0))$.  

For $v:\X\ra\R$, define the weighted sup-norm and induced function space: 
\[
    \norm{v}_\kappa:=\sup_{x\in\X}\frac{\abs{v(x)}}{\kappa(x)},
    \qquad
    \Bkap:=\set{v\in m\set{\cX\ra\cB(\R)}:\norm{v}_\kappa<\infty},
    \qquad
    \Bkap^+:=\set{v\in\Bkap:v\ge0}, 
\]
where $\cB_\kappa$ is a Banach space. 

The weighted Lipschitz seminorm is defined as
\begin{equation}
\label{eqn:weighted_lip_holder}
    [v]_{\kappa}
    :=
    \sup_{x\ne y}
    \frac{\abs{v(x)-v(y)}}{(\kappa(x)+\kappa(y))d_\X(x,y)}.
\end{equation}
We note that since $\kappa$ is locally bounded, a finite weighted Lipschitz seminorm implies ordinary continuity on compacts.

\section{Kernel Wasserstein Ambiguity}
\label{section:kernel}

We first analyze the kernel-robust operator \eqref{eqn:kernel_operator}.  The reward is allowed to depend on the current state, the action, and the next state.  

\begin{assumption}
\label{assump:kernel}
Assume the following conditions hold.
\begin{enumerate}[label=\textup{(\roman*)},leftmargin=*]
    \item The reward $r:\X\times\A\times\X\to\R_+$ is continuous. 
    There exists $L_r<\infty$ such that, for all $x,x',y\in\X$ and
    $a\in\A$,
    \begin{equation}
    \label{eqn:r_lipschitz_in_x}
        |r(x,a,y)-r(x',a,y)|
        \le
        L_r(\kappa(x)+\kappa(x'))d_\X(x,x').
    \end{equation}
    Moreover, there exists $R_\kappa<\infty$ such that, for all $(x,a)\in\X\times\A$,
    \begin{equation}
    \label{eqn:kernel_rwd_bd}
        \int_\X r(x,a,y)P_0(dy\mid x,a)
        \le
        R_\kappa\kappa(x).
    \end{equation}

    \item The nominal kernel $P_0$ is a Borel stochastic kernel with
    $P_0(\cdot\mid x,a)\in\cP_p(\cX)$ for all $(x,a)\in\X\times\A$.  There exists
    $D_\kappa<\infty$ and $\gamma D_\kappa<1$ such that
    \begin{equation}
    \label{eqn:kernel_nominal_drift}
        \int_\X \kappa(y)P_0(dy\mid x,a)
        \le
        D_\kappa\kappa(x)
        .
    \end{equation}

    \item There exists $L_P<\infty$ such that, for all $x,x'\in\X$ and $a\in\A$,
    \begin{equation}
    \label{eqn:kernel_wp_lipschitz}
        \Wass_p\!\left(P_0(\cdot\mid x,a),P_0(\cdot\mid x',a)\right)
        \le
        L_Pd_\X(x,x').
    \end{equation}
\end{enumerate}
\end{assumption}

\begin{theorem}\label{thm:kernel_main}
Suppose Assumption \ref{assump:kernel} holds.  Fix $\delta> 0$, let $V_{0,\delta}\equiv0$ and $V_{n+1,\delta}:=\cT_\delta V_{n,\delta}$ for all $n\ge 0$.  Then $\set{V_{n,\delta}:n\ge0}$ is pointwise non-decreasing and
\begin{equation}
\label{eqn:kernel_barV}
    \norm{V_{n,\delta}}_\kappa
    \le
    \bar V_\kappa
    :=
    \frac{R_\kappa}{1-\gamma D_\kappa}
\end{equation} for all $n\ge0$. Moreover, the pointwise limit $V_{n,\delta}\ua V_\delta\in \Bkap^+$
exists and is the minimal non-negative solution of $V_\delta=\cT_\delta V_\delta$ in $\Bkap^+$.  Furthermore, 
\begin{equation}
\label{eqn:kernel_main_lip}
    [V_\delta]_{\kappa}
    \le
    L_r+
    p2^{p-1}L_P
    \frac{R_\kappa+\gamma D_\kappa\bar V_\kappa}{\delta}.
\end{equation}
In particular, $[V_\delta]_{\kappa}=O(\delta^{-1})$ as $\delta\downarrow0$.
\end{theorem}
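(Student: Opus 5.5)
We argue by monotone Picard iteration and carry three properties along the iterates $V_{n,\delta}$: non-negativity with monotonicity, the weighted bound \eqref{eqn:kernel_barV}, and lower semicontinuity. Monotonicity of $\cT_\delta$ is immediate, since $v\le v'$ implies $\cT_\delta v\le\cT_\delta v'$. Then $V_{1,\delta}\ge 0=V_{0,\delta}$, and induction gives $V_{n,\delta}\uparrow$. For the bound, we take $\nu=P_0(\cdot\mid x,a)$ inside the infimum. Using \eqref{eqn:kernel_rwd_bd} and \eqref{eqn:kernel_nominal_drift}, this yields
\[
\cT_\delta v(x)\le \sup_a\int [r+\gamma v]\,dP_0\le R_\kappa\kappa(x)+\gamma D_\kappa\norm{v}_\kappa\kappa(x).
\]
Hence $\norm{V_{n+1,\delta}}_\kappa\le R_\kappa+\gamma D_\kappa\bar V_\kappa=\bar V_\kappa$ by induction. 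The same computation shows that the integrand $h_{x,a}:=r(x,a,\cdot)+\gamma V_{n,\delta}$ satisfies $\int h_{x,a}\,dP_0(\cdot\mid x,a)\le (R_\kappa+\gamma D_\kappa\bar V_\kappa)\kappa(x)=:H(x)$. This is exactly the constant that appears in \eqref{eqn:kernel_main_lip}.

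For the Lipschitz estimate at level $n$, fix $x,x'$ and an action $a$, and set $h:=r(x,a,\cdot)+\gamma V_{n,\delta}$, $\mu=P_0(\cdot\mid x,a)$, $\mu'=P_0(\cdot\mid x',a)$, and $\rho:=L_Pd_\X(x,x')$. By \eqref{eqn:kernel_wp_lipschitz} and the triangle inequality, every $\nu$ with $\Wass_p(\nu,\mu')\le\delta$ satisfies $\Wass_p(\nu,\mu)\le\delta+\rho$. Therefore
\[
\inf_{\Wass_p(\nu,\mu')\le\delta}\int h\,d\nu\ \ge\ \inf_{\Wass_p(\nu,\mu)\le\delta+\rho}\int h\,d\nu\ \ge\ \inf_{\Wass_p(\nu,\mu)\le\delta}\int h\,d\nu - p2^{p-1}H(x)\frac{\rho}{\delta},
\]
where the last step is Lemma~\ref{lemma:radius_sensitivity}. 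To apply the lemma we need $h$ lower semicontinuous. This holds because $r$ is continuous and $V_{n,\delta}$ is inductively lower semicontinuous; indeed, we will show below that it is continuous via its weighted Lipschitz bound.

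Next we replace $r(x,a,\cdot)$ by $r(x',a,\cdot)$ using \eqref{eqn:r_lipschitz_in_x}. This costs at most $L_r(\kappa(x)+\kappa(x'))d_\X(x,x')$ uniformly in $y$, and hence uniformly in $\nu$. Taking $\sup_a$ gives
\[
\cT_\delta V_{n,\delta}(x)-\cT_\delta V_{n,\delta}(x')\le \Bigl[L_r(\kappa(x)+\kappa(x'))+p2^{p-1}L_P\tfrac{H(x)}{\delta}\Bigr]d_\X(x,x').
\]
Since $H(x)\le (R_\kappa+\gamma D_\kappa\bar V_\kappa)(\kappa(x)+\kappa(x'))$, swapping the roles of $x$ and $x'$ yields \eqref{eqn:kernel_main_lip} for $V_{n+1,\delta}$, uniformly in $n$. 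The bound does not depend on the Lipschitz constant of $V_{n,\delta}$. This is the key point: we never need to control $[V_{n,\delta}]_\kappa$ through the recursion, so no factor of the form $\gamma L_P$ accumulates. The uniform weighted Lipschitz bound together with local boundedness of $\kappa$ gives continuity of $V_{n+1,\delta}$, which closes the induction on lower semicontinuity.

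Finally, we pass to the limit $V_\delta:=\lim_n V_{n,\delta}$. The limit inherits $\norm{V_\delta}_\kappa\le\bar V_\kappa$ and the seminorm bound \eqref{eqn:kernel_main_lip}, since both are pointwise closed conditions. To get the fixed-point identity, apply \eqref{eqn:robust_monotone_continuity} to $h_n=r(x,a,\cdot)+\gamma V_{n,\delta}\uparrow r+\gamma V_\delta$ for each fixed $(x,a)$, using $\int h\,d\mu<\infty$. Interchanging the increasing limits in $n$ with $\sup_a$ gives $\cT_\delta V_\delta=\lim_n V_{n+1,\delta}=V_\delta$.

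For minimality, let $U\in\Bkap^+$ be any non-negative fixed point. Then $U\ge 0=V_{0,\delta}$, and monotonicity gives $U=\cT_\delta^n U\ge V_{n,\delta}$ for every $n$, so $U\ge V_\delta$.

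The main obstacle is the lower-semicontinuity bookkeeping needed to invoke Proposition~\ref{prop:wasserstein_lower_duality} and Lemma~\ref{lemma:radius_sensitivity}, together with the measurability of $\cT_\delta v$ after the $\sup$ over $a$. Continuity of the iterates, obtained from the Lipschitz bound derived at each stage, handles both issues.
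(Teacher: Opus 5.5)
Your proposal is correct and follows essentially the paper's argument. It has the same ingredients: a one-step estimate from Wasserstein-ball inclusion plus Lemma~\ref{lemma:radius_sensitivity} that does not depend on the Lipschitz constant of the previous iterate, then induction along the monotone Picard iterates (with continuity supplying lower semicontinuity and measurability), then the monotone-continuity part of Proposition~\ref{prop:wasserstein_lower_duality} for the fixed point, and minimality by comparison. The only difference is cosmetic: you enlarge the ball around $P_0(\cdot\mid x,a)$ with integrand $r(x,a,\cdot)+\gamma V_{n,\delta}$ and swap the reward afterward, whereas the paper centers at $P_0(\cdot\mid x',a)$ with $r(x',a,\cdot)+\gamma u$, which is a mirror image of the same step.
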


\begin{proof}

We begin by establishing a one-step estimate that illustrates the Lipschitz regularization effect of the robust Bellman operator. This estimate not only underlies the Lipschitz continuity of the fixed point, but also ensures that $\set{V_{n,\delta}:n\geq 0}$ are continuous, and hence measurable.

\paragraph{One-step Lipschitz regularization.}
We show that for continuous $u\in\Bkap^+$,
\begin{equation}
\label{eqn:kernel_one_step_inline}
    [\cT_\delta u]_{\kappa}
    \le
    L_r+
    p2^{p-1}L_P
    \frac{R_\kappa+\gamma D_\kappa\|u\|_\kappa}{\delta}.
\end{equation}

Fix a continuous $u\in\Bkap^+$ and define
\[
    I_\delta(x,a)
    :=
    \inf_{\nu\in\cP_\delta(x,a)}
    \int_\X [r(x,a,y)+\gamma u(y)]\nu(dy).
\]
Set $B:=R_\kappa+\gamma D_\kappa\|u\|_\kappa$, then by \eqref{eqn:kernel_rwd_bd} and \eqref{eqn:kernel_nominal_drift}, for every $(x,a)\in\X\times\A$,
\begin{equation}
\label{eqn:center_expectation_bound_kernel}
    \int_\X [r(x,a,y)+\gamma u(y)]P_0(dy| x,a)
    \le
    B\kappa(x).
\end{equation}

To prove \eqref{eqn:kernel_one_step_inline}, we enlarge the ambiguity ball and apply the radius-sensitivity estimate in Lemma \ref{lemma:radius_sensitivity}. Let $\rho:=\Wass_p(P_0(\cdot| x,a),P_0(\cdot|x',a)).$
By triangle inequality, $P_\delta(x,a)\subset P_{\delta+\rho}(x',a)$.  Moreover, by
\eqref{eqn:r_lipschitz_in_x},
\[
    r(x',a,y)+\gamma u(y)
    \le
    r(x,a,y)+\gamma u(y)
    +
    L_r(\kappa(x)+\kappa(x'))d_\X(x,x').
\]
It follows that
\[
    I_{\delta+\rho}(x',a)
    \le \inf_{\nu\in\cP_\delta(x,a)}
    \int_\X [r(x',a,y)+\gamma u(y)]\nu(dy)\leq 
    I_\delta(x,a)
    +
    L_r(\kappa(x)+\kappa(x'))d_\X(x,x').
\]
Rearrange and add $I_\delta(x',a)$ on both sides, 
\[
\begin{aligned}
    I_\delta(x',a)-I_\delta(x,a) 
    &\le
    I_\delta(x',a) - I_{\delta+\rho}(x',a)
    +
    L_r(\kappa(x)+\kappa(x'))d_\X(x,x')  \\
    &\stackrel{(i)}{\le}
    p2^{p-1}B\kappa(x')\frac{\rho}{\delta}
    +
    L_r(\kappa(x)+\kappa(x'))d_\X(x,x')  \\
    &\le
    \left[
        L_r+
        p2^{p-1}L_P\frac{B}{\delta}
    \right]
    (\kappa(x)+\kappa(x'))d_\X(x,x')
\end{aligned}
\]
 where $(i)$ applies \eqref{eqn:radius_sensitivity} with center $\mu=P_0(\cdot|x',a)$ and $h(y) = r(x',a,y)+\gamma u(y)$ with $H= B\kappa(x')$ given by \eqref{eqn:center_expectation_bound_kernel}, and the last inequality uses \eqref{eqn:kernel_wp_lipschitz}, which gives $\rho = \Wass_p(P_0(\cdot| x,a),P_0(\cdot|x',a))\le L_Pd_\X(x,x')$. 
 
Then, exchanging $x$ and $x'$ gives the same estimate with the opposite sign. Therefore
\[
    |I_\delta(x',a)-I_\delta(x,a)|
    \le
    \left[
        L_r+
        p2^{p-1}L_P\frac{R_\kappa+\gamma D_\kappa\|u\|_\kappa}{\delta}
    \right]
    (\kappa(x)+\kappa(x'))d_\X(x,x').
\]
Since $|\sup_a f- \sup_a g|\leq \sup_a |f-g|$, this implies \eqref{eqn:kernel_one_step_inline}.

\paragraph{Monotonicity, boundedness, and regularity.}
We first show that the operator $\cT_\delta$ is monotone on $\Bkap^+$; i.e. $\cT_\delta u\le \cT_\delta v $ if $u\le v$.  Indeed, if $u\le v$, then
for every feasible transition law $\nu\in\cP_\delta(x,a)$, $\int_\X [r(x,a,y)+\gamma u(y)]\nu(dy)
    \le
    \int_\X [r(x,a,y)+\gamma v(y)]\nu(dy)$.

Moreover, for $u\in\Bkap^+$, the center law $P_0(\cdot\mid x,a)$ is feasible for
$\cP_\delta(x,a)$.  Thus, using \eqref{eqn:kernel_rwd_bd} and
\eqref{eqn:kernel_nominal_drift},
\begin{equation}\label{eqn:kernel_Tu_ub}
\begin{aligned}
    \cT_\delta u(x)
    \le
    \sup_{a\in\A}
    \int_\X [r(x,a,y)+\gamma u(y)]P_0(dy|x,a)
    \le
    \bigl(R_\kappa+\gamma D_\kappa\|u\|_\kappa\bigr)\kappa(x).
\end{aligned}
\end{equation}
Hence, $\|\cT_\delta u\|_\kappa
    \le
    R_\kappa+\gamma D_\kappa\|u\|_\kappa.$

We then use these properties and \eqref{eqn:kernel_one_step_inline} inductively to show that for all $n\ge 1$, $V_{n,\delta}\in \Bkap^+$, $V_{n,\delta} \ge V_{n-1,\delta}$, $\norm{V_{n,\delta}}_\kappa \leq \bar V_\kappa$ and \[
    [V_{n,\delta}]_{\kappa}
    \le
    L_r+
    p2^{p-1}L_P
    \frac{R_\kappa+\gamma D_\kappa\bar V_\kappa}{\delta}.
\]

For the base case, since $r\ge0$, $V_{1,\delta}\ge 0 =V_{0,\delta}$. Moreover, with $u = 0$ continuous, \eqref{eqn:kernel_one_step_inline} applies. We have
\[
[V_{1,\delta}]_{\kappa} = [\cT_{\delta}V_{0,\delta}]_{\kappa}\leq L_r+
    p2^{p-1}L_P
    \frac{R_\kappa}{\delta}.
\]
In particular, $V_{1,\delta}$ is continuous and belongs to $\Bkap^+$. For the induction step, assume the induction hypothesis for $V_{k,\delta}$. Since $V_{k,\delta}\in\Bkap^+$, \eqref{eqn:kernel_Tu_ub} implies that 
\begin{equation}\label{eqn:kernel_induction_kappa_norm_est}
    \norm{V_{k+1,\delta}}_\kappa = \|\cT_\delta V_{k,\delta}\|_\kappa
\le   R_\kappa+\gamma D_\kappa\|V_{k,\delta}\|_\kappa\leq \bar V_\kappa.
\end{equation}
By the monotonicity, $V_{k+1,\delta} = \cT_\delta V_{k,\delta} \geq \cT_{\delta}V_{k-1,\delta} = V_{k,\delta}$. Next, with continuous $u=V_{k,\delta}\in\cB_\kappa^+$, we apply \eqref{eqn:kernel_one_step_inline} to get
\[
[V_{k+1,\delta}]_{\kappa} = [\cT_{\delta}V_{k,\delta}]_{\kappa}\le L_r+
    p2^{p-1}L_P
    \frac{R_\kappa+\gamma D_\kappa\bar V_\kappa}{\delta}. 
\]
In particular, $V_{k+1,\delta}$ is continuous and hence measurable. This, together with \eqref{eqn:kernel_induction_kappa_norm_est} and monotonicity, implies that $V_{k+1,\delta}\in\Bkap^+$.

\paragraph{Limiting fixed-point and minimality.} Since $\set{V_{n,\delta}:n\ge 0}$ are non-decreasing and uniformly bounded in $\norm{\cd}_\kappa$, the pointwise limit $V_\delta$ exists and belongs to $\Bkap^+$. Moreover, for all $x\ne x'$, 
\[
    \frac{|V_\delta(x)-V_\delta(x')|}
    {(\kappa(x)+\kappa(x'))d_\X(x,x')}
    =
    \lim_{n\to\infty}
    \frac{|V_{n,\delta}(x)-V_{n,\delta}(x')|}
    {(\kappa(x)+\kappa(x'))d_\X(x,x')}
    \leq L_r+
    p2^{p-1}L_P
    \frac{R_\kappa+\gamma D_\kappa\bar V_\kappa}{\delta}.
\]
This implies \eqref{eqn:kernel_main_lip}.

The one-step estimate shows that $V_{n,\delta}$ is continuous for every $n\ge1$.
Fix $(x,a)$ and define
\[
    h_n(y):=r(x,a,y)+\gamma V_{n,\delta}(y),
    \qquad
    h(y):=r(x,a,y)+\gamma V_\delta(y).
\]
Then $h_n\uparrow h$, each $h_n$ and $h$ are non-negative and continuous, and
\[
    \int_\X h(y)P_0(dy\mid x,a)
    \le
    (R_\kappa+\gamma D_\kappa\bar V_\kappa)\kappa(x)<\infty.
\]
So, Proposition \ref{prop:wasserstein_lower_duality} applies with $\mu = P_0(\cd|x,a)$. We conclude that
\begin{align*}
    V_{\delta }(x) 
    &=\lim_{n\ra\infty}\cT_\delta V_{n,\delta}(x) = \lim_{n\ra\infty}\sup_{a\in\A}\inf_{\nu\in\cP_\delta(x,a)}\int h_nd\nu\\
    &\stackrel{(i)}{=}\sup_{n\geq 0}\sup_{a\in\A}
    \inf_{\nu\in\cP_\delta(x,a)}\int h_nd\nu\\
    &\stackrel{(ii)}{=}\sup_{a\in\A}
    \lim_{n\ra\infty}\inf_{\nu\in\cP_\delta(x,a)}\int h_nd\nu\\
    &= \cT_{\delta}V_{\delta}(x)
\end{align*}
where $(i)$ and $(ii)$ follow from the non-decreasing property of $h_n$, and the last equality follows from Proposition \ref{prop:wasserstein_lower_duality}.

Finally, we show that $V_\delta$ is minimal. If $U\in\Bkap^+$ is another non-negative fixed point of $\cT_\delta$, then
$V_{0,\delta}=0\le U$.  Monotonicity gives $V_{n,\delta}\le U$ for all $n$, and
therefore $V_\delta\le U$. \end{proof}

\section{Exogenous Noise Wasserstein Ambiguity}
\label{section:caa_cau}

This section studies the robust Bellman operators \eqref{eqn:caa_operator} and
\eqref{eqn:cau_operator}.  The argument mirrors the kernel case.  The nominal
growth assumptions control weighted boundedness, while Lipschitz regularity of the robust value comes from a Lipschitz property of the next-state law as a
function of the current state.

\begin{assumption}[Primitive conditions for exogenous-noise ambiguity]
\label{assump:noise}
Assume the following conditions hold.
\begin{enumerate}[label=\textup{(\roman*)},leftmargin=*]
    \item The functions $r:\X\times\A\times\W\to\R_+$ and
    $f:\X\times\A\times\W\to\X$ are continuous.  There exists $L_r<\infty$ such
    that, for all $x,y\in\X$, $a\in\A$, and $w,w'\in\W$,
    \begin{equation}
    \label{eqn:noise_reward_lip}
        |r(x,a,w)-r(y,a,w')|
        \le
        L_r(\kappa(x)+\kappa(y))
        \bigl(d_\X(x,y)+d_\W(w,w')\bigr).
    \end{equation}

    \item There exist $R_\kappa,D_\kappa<\infty$ with $\gamma D_\kappa<1$ such
    that, for all $(x,a)\in\X\times\A$,
    \begin{equation}
    \label{eqn:noise_rwd_drift_bd}
        \int_\W r(x,a,w)\psi_0(dw)
        \le R_\kappa\kappa(x),
        \qquad
        \int_\W \kappa(f(x,a,w))\psi_0(dw)
        \le D_\kappa\kappa(x).
    \end{equation}
\end{enumerate}
\end{assumption}

Assumption \ref{assump:noise} mirrors parts (i) and (ii) in Assumption \ref{assump:kernel}. To enable the same mechanism that leads to Theorem \ref{thm:kernel_main}, we want to consider an analogue of the kernel condition
\[
    \Wass_p(P_0(\cdot\mid x,a),P_0(\cdot\mid y,a))
    \le L_Pd_\X(x,y)
\]
for the Wasserstein robust exogenous-noise model. 

In the kernel model, the ambiguity set is a Wasserstein ball directly over
next-state laws.  In the exogenous-noise model, the ambiguity set is instead a
Wasserstein ball over noise laws, while the induced next-state ambiguous laws, given by the pushed-forward measures, are usually not a distribution ball in any distance. Thus, a Lipschitz condition imposed only on the center $\psi_0$ could fail to control the
induced law for the next state. Instead, we impose the following condition in Assumption \ref{assump:noise_law_lip}. We will discuss sufficient and easy-to-verify conditions in Proposition \ref{prop:noise_sufficient_cond}.

\begin{assumption}
\label{assump:noise_law_lip}
There exists $L_P<\infty$ such that the following holds.
\begin{enumerate}[label=\textup{(\roman*)},leftmargin=*]
    \item For the CAA case, assume that for each $\delta>0$, $x,y\in\X$, $a\in\A$, and $\psi\in\cU_\delta$, there exists $\psi'\in\cP_p(\cW)$ with $\Wass_p(\psi,\psi')\le L_Pd_\X(x,y)$ such that
    \[\law_{W\sim\psi}(f(x,a,W))
        = \law_{W\sim\psi'}(f(y,a,W)).
    \]

    \item  For the CAU case, assume that for each $\delta>0$, $x,y\in\X$, $\phi\in\cP(\cA)$, and $\psi\in\cU_\delta$, there exists $\psi'\in\cP_p(\cW)$ with $\Wass_p(\psi,\psi')\le L_Pd_\X(x,y)$ such that
    \[
        \law_{(A,W)\sim\phi\times\psi}(f(x,A,W))
        = \law_{(A,W)\sim\phi\times\psi'}(f(y,A,W)).
    \]
\end{enumerate}
\end{assumption}
\begin{remark}\label{rmk:law_matching}
   We observe that the next-state law-matching requirements in Assumption~\ref{assump:noise_law_lip} can be equivalently written using pushforward measures: \begin{equation}\label{eqn:noise_next_tstate_law_match}f(x,\cdot,\cdot)_\sharp(\phi\times\psi)=f(y,\cdot,\cdot)_\sharp(\phi\times\psi').
   \end{equation} Here, $\phi\in\set{\delta_a:a\in\A}$ in the CAA case, and $\phi\in\cP(\cA)$ in the CAU case. In particular, this shows that the CAU condition is sufficient for the CAA case. But the converse is not true in general.

   We also note that the exact law matching condition in \eqref{eqn:noise_next_tstate_law_match} can be further relaxed to allow deviations in a weighted optimal transport distance that are Lipschitz in the state; see Theorem~\ref{thm:noise_relaxed}. While this flexibility can be useful, especially in unbounded state-space settings, the resulting presentation is more involved. We therefore first present the main result under the cleaner setup in Assumption \ref{assump:noise_law_lip}.
\end{remark}

\begin{theorem}
\label{thm:caa_cau_main}
Suppose Assumptions \ref{assump:noise} and \ref{assump:noise_law_lip} hold.  Fix
$\delta>0$ and $\mrm{M}\in\{\mrm{CAA},\mrm{CAU}\}$.  Let
$V_{0,\delta}^{\mrm{M}}\equiv0$ and
$V_{n+1,\delta}^{\mrm{M}}:=\cT_\delta^{\mrm{M}}V_{n,\delta}^{\mrm{M}}$
for all $n\ge0$.  Then $\{V_{n,\delta}^{\mrm{M}}:n\ge0\}$ is pointwise non-decreasing and
\begin{equation}
\label{eqn:noise_barV}
    \norm{V_{n,\delta}^{\mrm{M}}}_\kappa
    \le
    \bar V_\kappa
    :=
    \frac{R_\kappa}{1-\gamma D_\kappa}
\end{equation}
for all $n\ge0$. Moreover, the pointwise limit
$V_{n,\delta}^{\mrm{M}}\ua V_\delta^{\mrm{M}}\in\Bkap^+$
exists and is the minimal non-negative solution of
$V_\delta^{\mrm{M}}=\cT_\delta^{\mrm{M}}V_\delta^{\mrm{M}}$ in $\Bkap^+$.
Furthermore, 
\begin{equation}
\label{eqn:noise_main_lip}
    [V_\delta^{\mrm{M}}]_{\kappa}
    \le
    L_r(1+L_P)
    +
    p2^{p-1}L_P
    \frac{R_\kappa+\gamma D_\kappa\bar V_\kappa}{\delta}.
\end{equation}
In particular,
$[V_\delta^{\mrm{M}}]_{\kappa}=O(\delta^{-1})$ as $\delta\downarrow0$.
\end{theorem}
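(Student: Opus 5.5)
The plan mirrors the proof of Theorem~\ref{thm:kernel_main}. The only new ingredient is a one-step Lipschitz regularization estimate for $\cT^{\mrm{M}}_\delta$. Once it is in hand, the monotone Picard iteration, the weighted-norm bound \eqref{eqn:noise_barV}, the passage to the limit and minimality go through essentially verbatim.

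\textbf{Routine steps.} Monotonicity of $\cT^{\mrm{M}}_\delta$ on $\Bkap^+$ is immediate. For the weighted bound, note that $\psi_0\in\cU_\delta$ is feasible. Then \eqref{eqn:noise_rwd_drift_bd} gives
\[
\cT^{\mrm{M}}_\delta u(x)\le (R_\kappa+\gamma D_\kappa\|u\|_\kappa)\kappa(x).
\]
In the CAU case one integrates over $\phi$ first, and the bound is uniform in $a$. Induction from $V_{0,\delta}^{\mrm M}=0$ then yields monotonicity and \eqref{eqn:noise_barV}.

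\textbf{One-step estimate.} For continuous $u\in\Bkap^+$ we aim to show
\[
[\cT^{\mrm M}_\delta u]_\kappa\le L_r(1+L_P)+p2^{p-1}L_P\frac{R_\kappa+\gamma D_\kappa\|u\|_\kappa}{\delta}.
\]
Fix $x,y$ and $\phi$, where $\phi=\delta_a$ in the CAA case. Define
\[
J_\delta(x,\phi):=\inf_{\psi\in\cU_\delta}\int\!\!\int [r(x,a,w)+\gamma u(f(x,a,w))]\phi(da)\psi(dw).
\]
Take any $\psi\in\cU_\delta$ and let $\psi'$ be the law given by Assumption~\ref{assump:noise_law_lip}. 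Then $\Wass_p(\psi,\psi')\le\rho:=L_Pd_\X(x,y)$, so $\psi'\in\cU_{\delta+\rho}$ by the triangle inequality.

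By law matching, the continuation terms agree:
\[
\int\!\!\int u(f(x,a,w))\,\phi\,\psi=\int\!\!\int u(f(y,a,w))\,\phi\,\psi'.
\]
For the reward terms, take an optimal coupling $\pi$ of $(\psi,\psi')$. Using \eqref{eqn:noise_reward_lip} and $p\ge1$,
\[
\Bigl|\int r(x,a,\cdot)\,d\psi-\int r(y,a,\cdot)\,d\psi'\Bigr|\le L_r(\kappa(x)+\kappa(y))\bigl(d_\X(x,y)+\Wass_1(\psi,\psi')\bigr),
\]
and $\Wass_1\le\Wass_p\le L_Pd_\X(x,y)$. Taking the infimum over $\psi$ gives
\[
J_{\delta+\rho}(y,\phi)\le J_\delta(x,\phi)+L_r(1+L_P)(\kappa(x)+\kappa(y))d_\X(x,y).
\]

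It remains to bound $J_\delta(y,\phi)-J_{\delta+\rho}(y,\phi)$. Apply Lemma~\ref{lemma:radius_sensitivity} on $(\W,\cW)$ with center $\psi_0$ and
\[
h(w)=\int[r(y,a,w)+\gamma u(f(y,a,w))]\phi(da).
\]
This $h$ is nonnegative and lower semicontinuous; in the CAU case this follows by Fatou from continuity of the integrand, which holds because $u$, $r$, $f$ are continuous. Assumption~\ref{assump:noise} gives $H=(R_\kappa+\gamma D_\kappa\|u\|_\kappa)\kappa(y)$. The lemma yields the $p2^{p-1}L_P H\, d_\X(x,y)/\delta$ term. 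Symmetrizing in $x,y$ and using $|\sup_\phi f-\sup_\phi g|\le\sup_\phi|f-g|$ gives the estimate.

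\textbf{Main obstacle.} The delicate point is that Lemma~\ref{lemma:radius_sensitivity} and Proposition~\ref{prop:wasserstein_lower_duality} need the integrand to be a fixed function of $w$ integrated against a ball around $\psi_0$. In the CAU case the $\phi$-average must therefore be taken inside, with $\phi$ fixed before the $\inf_\psi$. This is legitimate because $\phi$ is chosen outside the infimum.

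\textbf{Limit and minimality.} Induction gives a uniform weighted Lipschitz bound on the iterates, with $\|u\|_\kappa\le\bar V_\kappa$, which passes to the limit to give \eqref{eqn:noise_main_lip}. The iterates are continuous, so $h_n\uparrow h$ with $h_n$ lower semicontinuous. Proposition~\ref{prop:wasserstein_lower_duality}, applied with center $\psi_0$ and for each fixed $\phi$, then justifies exchanging the limit with the inner infimum. Exchanging the monotone limit with $\sup_\phi$ gives $V^{\mrm M}_\delta=\cT^{\mrm M}_\delta V^{\mrm M}_\delta$. Minimality follows by comparison with any nonnegative fixed point, as in the kernel case.
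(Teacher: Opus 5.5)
Your proposal is correct and follows the paper's proof essentially step for step. It uses the same one-step estimate: $\psi$ is transported to $\psi'\in\cU_{\delta+\rho}$ via law matching, the reward term is bounded through a coupling and $\Wass_1\le\Wass_p$, and the radius gap is closed with Lemma~\ref{lemma:radius_sensitivity} at center $\psi_0$ with $\phi$ fixed inside the infimum. This is followed by the same monotone Picard induction, the same limit-exchange via Proposition~\ref{prop:wasserstein_lower_duality}, and the same minimality comparison. One small point: where you take an arbitrary $\psi$ (rather than the paper's $\epsilon$-optimal one), you only need the one-sided reward inequality, which avoids any $\infty-\infty$ issue when $\int h^\phi_x\,d\psi=\infty$.
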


\begin{proof}
To simplify notation, in the CAA case, write $\Phi_{\mrm{CAA}}:=\{\delta_a:a\in\A\},$
and in the CAU case, write $\Phi_{\mrm{CAU}}:=\cP(\cA).$

The proof mirrors that of the kernel robust case. We begin by establishing the one-step Lipschitz estimate as in the kernel robust case. The main difference is that the state-wise robust value comparison is performed by changing the noise law and using the next-state law Lipschitz condition in
Assumption \ref{assump:noise_law_lip}.

\paragraph{One-step Lipschitz regularization.}
We show that for continuous $u\in\Bkap^+$,
\begin{equation}
\label{eqn:noise_one_step_inline}
    [\cT_\delta^{\mrm{M}}u]_{\kappa}
    \le
    L_r(1+L_P)
    +
    p2^{p-1}L_P
    \frac{R_\kappa+\gamma D_\kappa\norm{u}_\kappa}{\delta}.
\end{equation}

We begin by establishing some preliminary bounds. Fix a continuous $u\in\Bkap^+$.  
For $\phi\in\Phi_{\mrm{M}}$, define
\[
    h_x^\phi(w)
    :=
    \int_\A
    \sqbk{r(x,a,w)+\gamma u(f(x,a,w))}
    \phi(da). 
\]
We note that $h_x^\phi$ is lower semicontinuous by Fatou's lemma. Indeed, for $w_n\ra w$, 
\begin{equation}\label{eqn:tu_lsc}h_x^\phi(w)
    =
    \int_\A
    \lim _{n\ra\infty}\sqbk{r(x,a,w_n)+\gamma u(f(x,a,w_n))}
    \phi(da)\leq \liminf_{n\ra\infty} h_x^\phi(w_n).
\end{equation}
Also, we note that since $\A$ could be unbounded, $h_x^\phi$ could be $+\infty$ for some $w$.

As in the kernel robust case, define
\[
    I_\delta(x,\phi)
    :=
    \inf_{\psi\in\cU_\delta}
    \int_\W h_x^\phi(w)\psi(dw).
\]
Then by construction, $\cT_\delta^{\mrm{M}}u(x)=\sup_{\phi\in\Phi_{\mrm{M}}} I_\delta(x,\phi).$
Moreover, set $B:=R_\kappa+\gamma D_\kappa\norm{u}_\kappa$. By the reward and drift bound \eqref{eqn:noise_rwd_drift_bd} in Assumption \ref{assump:noise}, for every
$x\in\X$ and every $\phi\in\Phi_{\mrm{M}}$,
\begin{equation}
\label{eqn:noise_center_expectation_bound}
    \int_\W h_x^\phi(w)\psi_0(dw)
    \le
    B\kappa(x).
\end{equation}
This also implies that $I_\delta(x,\phi) < \infty$.

We now proceed to show \eqref{eqn:noise_one_step_inline}.  Fix $x,y\in\X$ and $\phi\in\Phi_{\mrm{M}}$. By the definition of $I_\delta$, for every $\epsilon > 0$, we can find $\psi\in\cU_\delta$ such that
\begin{equation}\label{eqn:tu_psi_choice}
    \int_\W h_x^\phi(w)\psi(dw)
    \le
    I_\delta(x,\phi)+\epsilon < \infty.
\end{equation}

By Assumption \ref{assump:noise_law_lip}, there exists
$\psi'\in\cP_p(\cW)$ such that
\begin{equation}\label{eqn:tu_psiprime_choice}
    \Wass_p(\psi,\psi')\le L_Pd_\X(x,y)=:\rho
\end{equation}
and \eqref{eqn:noise_next_tstate_law_match} holds for $\phi\in\Phi_\mrm{M}$. Then, $\Wass_p(\psi',\psi_0) \leq \rho + \delta$ and hence $\psi'\in\cU_{\delta+\rho}.$
Moreover, for $u\in\Bkap^+$,
\begin{equation}
\label{eqn:noise_continuation_identity}
    \int_\W\int_\A u(f(y,a,w))\phi(da)\psi'(dw)
    =
    \int_\W\int_\A u(f(x,a,w))\phi(da)\psi(dw).
\end{equation}

On the other hand, for the reward, we first note that by non-negativity and \eqref{eqn:tu_psi_choice}
\[
0\leq \int_\W\int_\A r(x,a,w)\phi(da)\psi(dw) \leq \int_\W h_x^\phi(w)\psi(dw)  < \infty
\]
Then, for any $\epsilon'>0$, let $\pi\in\Pi(\psi,\psi')$ be an $\epsilon'$-optimal coupling for the $L^1$ Wasserstein distance; i.e.
\[\int_{\W\times\W}d_\W(w,w')\pi(dw,dw') \leq \Wass_1(\psi,\psi') + \epsilon'.
\] By the finiteness of the first reward integral and the 
Lipschitz condition in Assumption \ref{assump:noise},
\[
\begin{aligned}
&\abs{
    \int_\W\int_\A r(x,a,w)\phi(da)\psi(dw)
    -
    \int_\W\int_\A r(y,a,w)\phi(da)\psi'(dw)}\\
&= \abs{\int_\W\int_\A [r(x,a,w)- r(y,a,w')] \phi(da)\pi(dw,dw')}\\
&\stackrel{(i)}{\le}
    L_r(\kappa(x)+\kappa(y))
    \left(d_\X(x,y)+\int_{\W\times\W}d_\W(w,w')\pi(dw,dw')\right)\\
    &\leq L_r(\kappa(x)+\kappa(y))(d_\X(x,y)+ \Wass_1(\psi,\psi') + \epsilon')\\
    &\leq L_r(\kappa(x)+\kappa(y))(d_\X(x,y)+ \Wass_p(\psi,\psi') + \epsilon')\\
\end{aligned}
\]
where $(i)$ follows from \eqref{eqn:noise_reward_lip} and the last line follows from Jensen's inequality. Since $\epsilon'$ can be arbitrarily small and recalling the choice $\psi'$ in \eqref{eqn:tu_psiprime_choice}, we conclude that
\[
\begin{aligned}
&\left|
    \int_\W\int_\A r(x,a,w)\phi(da)\psi(dw)
    -
    \int_\W\int_\A r(y,a,w)\phi(da)\psi'(dw)
\right|                                      \\
&\qquad\le
    L_r(1+L_P)(\kappa(x)+\kappa(y))d_\X(x,y).
\end{aligned}
\]
Together with \eqref{eqn:noise_continuation_identity}, this gives
\begin{equation}\label{eqn:tu_h_x_y_bd}
    \int_\W h_y^\phi(w)\psi'(dw)
    \le
    \int_\W h_x^\phi(w)\psi(dw)
    +
    L_r(1+L_P)(\kappa(x)+\kappa(y))d_\X(x,y).
\end{equation}
Since $\psi'\in\cU_{\delta+\rho}$ and $\psi$, as specified in \eqref{eqn:tu_psi_choice}, is $\epsilon$-optimal for
$I_\delta(x,\phi)$ and $\epsilon$ can be arbitrarily small,  by letting $\epsilon\downarrow0$ we obtain
\begin{equation}
\label{eqn:noise_state_radius_comparison}
    I_{\delta+\rho}(y,\phi)
    \le
    I_\delta(x,\phi)
    +
    L_r(1+L_P)(\kappa(x)+\kappa(y))d_\X(x,y).
\end{equation}

On the other hand, applying the radius-sensitivity estimate in Lemma \ref{lemma:radius_sensitivity} with center $\mu = \psi_0\in\cP_p(\cW)$, lower semicontinuous integrand
$h_y^\phi :\W\ra [0,+\infty]$, and $H=B\kappa(y)$ from
\eqref{eqn:noise_center_expectation_bound}, we get
\[
    0\le
    I_\delta(y,\phi)-I_{\delta+\rho}(y,\phi)
    \le
    p2^{p-1}B\kappa(y)\frac{\rho}{\delta}.
\]
Combining this with \eqref{eqn:noise_state_radius_comparison} and recalling the definition $\rho=L_Pd_\X(x,y)$  in \eqref{eqn:tu_psiprime_choice}, we have
\[
\begin{aligned}
    I_\delta(y,\phi)-I_\delta(x,\phi)&=I_\delta(y,\phi)\pm I_{\delta+\rho}(y,\phi) - I_\delta(x,\phi)\\
    &\le
    \left[
        L_r(1+L_P)
        +
        p2^{p-1}L_P\frac{B}{\delta}
    \right]
    (\kappa(x)+\kappa(y))d_\X(x,y).
\end{aligned}
\]
Exchanging $x$ and $y$ gives the same estimate with the opposite sign.  Hence, recalling the definition of $B$ and taking sup over $\phi\in\Phi_{\mrm{M}}$ yields
\begin{align*}
    \abs{\cT_{\delta}^{\mrm{M}} u(x)-\cT_{\delta}^{\mrm{M}} u(y)}&=\abs{\sup_{\phi\in\Phi_{\mrm{M}}}I_\delta(x,\phi)-\sup_{\phi\in\Phi_{\mrm{M}}}I_\delta(y,\phi)}
    \\
    &\leq \sup_{\phi\in\Phi_{\mrm{M}}}\abs{I_\delta(x,\phi)-I_\delta(y,\phi)}\\
    &\le
    \left[
        L_r(1+L_P)
        +
        p2^{p-1}L_P
        \frac{R_\kappa+\gamma D_\kappa\norm{u}_\kappa}{\delta}
    \right]
    (\kappa(x)+\kappa(y))d_\X(x,y),
\end{align*}
for all $x,y\in\X$. This implies  \eqref{eqn:noise_one_step_inline}.

\paragraph{Monotonicity, boundedness, and regularity.}
We note that the monotonicity of $\cT_\delta^{\mrm{M}}$ on $\Bkap^+$ follows from the same argument as in the kernel robust case. Therefore, $\{V_{n,\delta}^{\mrm{M}}:n\ge 0\}$ is pointwise increasing. 

Moreover, for $u\in\Bkap^+$, the center law $\psi_0$ belongs to $\cU_\delta$.
Thus, using the nominal growth and drift bounds in Assumption \ref{assump:noise},
\begin{equation}
\label{eqn:noise_Tu_ub}
\begin{aligned}
    \cT_\delta^{\mrm{M}}u(x)
    &\le
    \sup_{\phi\in\Phi_{\mrm{M}}}
    \int_\W\int_\A
    \sqbk{r(x,a,w)+\gamma u(f(x,a,w))}
    \phi(da)\psi_0(dw)                         \\
    &\le
    \bigl(R_\kappa+\gamma D_\kappa\norm{u}_\kappa\bigr)\kappa(x).
\end{aligned}
\end{equation}
Hence $\norm{\cT_\delta^{\mrm{M}}u}_\kappa \le R_\kappa+\gamma D_\kappa\norm{u}_\kappa.$
\par Using this estimate, monotonicity, and \eqref{eqn:noise_one_step_inline}
inductively, we obtain, for all $n\ge1$, $V_{n,\delta}^{\mrm{M}}\in\Bkap^+,$ $V_{n,\delta}^{\mrm{M}}\ge V_{n-1,\delta}^{\mrm{M}},$ $\norm{V_{n,\delta}^{\mrm{M}}}_\kappa\le\bar V_\kappa,$
and
\[
    [V_{n,\delta}^{\mrm{M}}]_{\kappa}
    \le
    L_r(1+L_P)
    +
    p2^{p-1}L_P
    \frac{R_\kappa+\gamma D_\kappa\bar V_\kappa}{\delta}.
\]
The base case follows from $u=0$, which is continuous.  The induction step
uses that a finite weighted Lipschitz seminorm implies continuity.  This proves
\eqref{eqn:noise_barV}.

Since the sequence is non-decreasing and uniformly bounded in
$\norm{\cdot}_\kappa$, the pointwise limit 
$V_\delta^{\mrm{M}}(x):=\lim_{n\to\infty}V_{n,\delta}^{\mrm{M}}(x)$ exists. Moreover, for every $x\ne y$,
\[
    \frac{|V_\delta^{\mrm{M}}(x)-V_\delta^{\mrm{M}}(y)|}
    {(\kappa(x)+\kappa(y))d_\X(x,y)}
    =
    \lim_{n\to\infty}
    \frac{|V_{n,\delta}^{\mrm{M}}(x)-V_{n,\delta}^{\mrm{M}}(y)|}
    {(\kappa(x)+\kappa(y))d_\X(x,y)}.
\]
Therefore \eqref{eqn:noise_main_lip} follows by taking the supremum over
$x\ne y$. In particular, this also implies that $V_\delta^{\mrm{M}}$ is measurable and hence in $\Bkap^+$. 

\paragraph{Limiting fixed point and minimality.}
Fix $x\in\X$ and $\phi\in\Phi_{\mrm{M}}$.  Define
\[
    h_n^\phi(w)
    :=
    \int_\A
    \sqbk{
        r(x,a,w)+
        \gamma V_{n,\delta}^{\mrm{M}}(f(x,a,w))
    }
    \phi(da),
\]
and define $h^\phi$ analogously with $V_\delta^{\mrm{M}}$ in place of $V_{n,\delta}^{\mrm{M}}$.  

Then $h_n^\phi\uparrow h^\phi$. Moreover, since $r$, $f$, $\{V_{n,\delta}^{\mrm{M}}:n\ge 0\}$, and $V_\delta^{\mrm{M}}$ are continuous, by the same argument in \eqref{eqn:tu_lsc}, each $h_n^\phi$ and $h^\phi$ is non-negative and lower semicontinuous in $w$ for all $n\ge 0$.
Furthermore, by \eqref{eqn:noise_Tu_ub} and
\eqref{eqn:noise_barV},
\[
    \int_\W h^\phi(w)\psi_0(dw)
    \le
    (R_\kappa+\gamma D_\kappa\bar V_\kappa)\kappa(x)<\infty.
\]
Thus, Proposition \ref{prop:wasserstein_lower_duality} applies. Hence, we conclude that for all $x\in\X$, 
\begin{align*}
V_{\delta}^{\mrm{M}}(x) \stackrel{(i)}{=}
    \sup_{\phi\in\Phi_{\mrm{M}}}\sup_{n\ge 0}\inf_{\psi\in\cU_\delta}\int h_n^\phi d\psi\stackrel{(ii)}{=} \sup_{\phi\in\Phi_{\mrm{M}}}\lim_{n\ra\infty}\inf_{\psi\in\cU_\delta}\int h_n^\phi d\psi= \cT_{\delta}^{\mrm{M}} V_{\delta}^{\mrm{M}}(x).
\end{align*}
where $(i)$ follows from the monotonicity of $V_{n,\delta}^{\mrm{M}}(x)$ and $(ii)$ uses Proposition \ref{prop:wasserstein_lower_duality}. 

The minimality follows from the same argument as in the kernel robust case.
\end{proof}
\subsection{A Sufficient Condition on the Transition Dynamics}
As discussed before Assumption~\ref{assump:noise_law_lip}, the law-matching condition there is required to hold for all $\psi\in\cU_\delta$, which can be difficult to verify directly. To address this, we present a sufficient condition directly on $f$ that is easy to check in many standard models arising in operations applications.

\begin{proposition}\label{prop:noise_sufficient_cond}
The following conditions are sufficient for Assumption \ref{assump:noise_law_lip}.

\begin{enumerate}[label=\textup{(\roman*)},leftmargin=*]
    \item For the CAA case, suppose that for every
    $x,y\in\X$, $a\in\A$, and $w\in\W$, there exists $w'\in\W$ such that
    \[
        f(y,a,w')=f(x,a,w),
        \quad\text{and}\quad        d_\W(w,w')\le L_Pd_\X(x,y).
    \]
    Then Assumption \ref{assump:noise_law_lip} part (i) holds.

    \item For the CAU case, suppose that for every
    $x,y\in\X$ and $w\in\W$, there exists $w'\in\W$ such that
    \[
        f(y,a,w')=f(x,a,w),\ \forall a\in\A,\quad\text{and}\quad 
        d_\W(w,w')\le L_Pd_\X(x,y).
    \]
    Then Assumption \ref{assump:noise_law_lip} part (ii) holds.
\end{enumerate}
\end{proposition}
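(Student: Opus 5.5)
The plan is to build $\psi'$ as the pushforward of $\psi$ under a measurable selection of the pointwise matching noise. Fix $x,y$, and in the CAA case also fix $a$. Consider the set-valued map
\[
\Gamma(w):=\setcond{w'\in\W}{f(y,a,w')=f(x,a,w),\ d_\W(w,w')\le L_Pd_\X(x,y)}.
\]
In the CAU case we use instead the intersection over all $a\in\A$ of the corresponding sets. By hypothesis $\Gamma(w)\neq\emptyset$ for every $w$. Each $\Gamma(w)$ is closed, because $f$ is continuous by Assumption~\ref{assump:noise}. It is also bounded, since it lies in a closed ball around $w$, so it is compact because $\W$ is proper.

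\textbf{Step 1: a measurable selection.} The graph $\set{(w,w'):w'\in\Gamma(w)}$ is closed in $\W\times\W$, since it is defined by continuous equalities and inequalities. In the CAU case it is an intersection of closed sets indexed by $a$, which is still closed. A closed-valued map with closed graph and compact values in a proper Polish space is weakly measurable: $\set{w:\Gamma(w)\cap K\ne\emptyset}$ is closed for every compact $K$, and open sets are countable unions of compacts. The Kuratowski--Ryll-Nardzewski theorem then gives a Borel selector $g:\W\to\W$ with $g(w)\in\Gamma(w)$ for all $w$. This is the step I expect to need the most care, but properness of $\W$ together with closedness of the graph should settle it. A fallback is a measurable-selection theorem for Borel graphs, which yields a universally measurable selector; that suffices after completing $\psi$.

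\textbf{Step 2: define $\psi'$ and check the distance.} Set $\psi':=g_\sharp\psi$ and use the coupling $\pi:=(\mathrm{id},g)_\sharp\psi\in\Pi(\psi,\psi')$. Then
\[
\Wass_p(\psi,\psi')^p\le\int d_\W(w,g(w))^p\psi(dw)\le (L_Pd_\X(x,y))^p .
\]
This gives $\Wass_p(\psi,\psi')\le L_Pd_\X(x,y)$. It also gives $\psi'\in\cP_p(\cW)$, because $d_\W(g(w),w_0)\le d_\W(w,w_0)+L_Pd_\X(x,y)$ and $\psi\in\cP_p(\cW)$ with $\psi\in\cU_\delta$.

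\textbf{Step 3: law matching.} For bounded measurable $\varphi$ on $\X$, in the CAA case,
\[
\int\varphi(f(y,a,w'))\psi'(dw')=\int\varphi(f(y,a,g(w)))\psi(dw)=\int\varphi(f(x,a,w))\psi(dw),
\]
so $\law_{W\sim\psi'}(f(y,a,W))=\law_{W\sim\psi}(f(x,a,W))$. In the CAU case, $g$ does not depend on $a$, so $f(y,a,g(w))=f(x,a,w)$ holds for all $(a,w)$. Integrating against $\phi\times\psi$ with Fubini then gives \eqref{eqn:noise_next_tstate_law_match}, because $(\mathrm{id}_\A\times g)_\sharp(\phi\times\psi)=\phi\times\psi'$. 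This verifies both parts of Assumption~\ref{assump:noise_law_lip}, uniformly in $\delta$.
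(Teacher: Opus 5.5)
Your proposal is correct and follows essentially the same route as the paper. Both arguments take a Borel selector $T$ (your $g$) of the matching-noise correspondence, which has a closed graph and compact sections in the proper space $\W$. Both then set $\psi'=T_\sharp\psi$, bound $\Wass_p(\psi,\psi')$ through the transport-map coupling together with the moment estimate, and verify law matching by change of variables, with the CAU case handled by intersecting over $a\in\A$. The only difference is how the selector is obtained. You check weak measurability by hand, using properness of $\W$ to write open sets as countable unions of compacts, and then apply Kuratowski--Ryll-Nardzewski. The paper instead invokes the uniformization theorem for Borel sets with compact sections (Srivastava, Thm.~5.7.1), which needs only that $B$ is closed and each $B_w$ is compact.
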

\begin{remark}
    While these conditions are more direct, they are far from necessary for Assumption~\ref{assump:noise_law_lip}, as they require pointwise matching dynamics rather than law matching. The condition in part (i) is usually benign and easy to verify in many stochastic control problems, whereas part (ii) can appear much stronger. Examples of dynamics satisfying part (ii) include $f(x,a,w)=g(h(x,w),a),$ for some functions $g$ and $h$, where $h$ has the following property: for every $x,y\in\X$ and $w\in\W$, there exists $w'$ with $d_{\W}(w,w')\leq L_Pd_\X(x,y)$ such that $h(x,w)=h(y,w')$.
\end{remark}

\begin{proof}
We first state a classical measurable selection result. Recall that $\W$ is Polish. Let $B\in \cW\otimes\cW$ be in the product $\sigma$-field and $\pi_1(B):=\set{w\in\W:(w,w')\in B}$. If $B_w:= \set{w'\in\W: (w,w')\in B}$ is compact for all $w\in\W$, then there exists a Borel measurable function $T:\pi_1(B)\ra \W$ such that $T(w)\in B_w$.  This is a simplified version of \citet[Theorem 5.7.1]{srivastava1998course}.

With this theorem, we prove the CAA and CAU statements separately. 

\paragraph{The CAA Case.} Fix $x,y\in\X$ and $a\in\A$. Consider the set
\[
    B =
    \left\{
        (w,w')\in\W^2:
        f(x,a,w)= f(y,a,w'),\
        d_\W(w,w')\le L_Pd_\X(x,y)
    \right\}.
\]
Note that, under the setup in part (i), $$\pi_1(B):=\set{w\in\W:(w,w')\in B} =\W$$
Since $f$ and $d_\W$ are continuous, for $(w_n,w_n')\in B$ and $(w_n,w_n')\ra (w,w')$, we have that $f(x,a,w)= f(y,a,w')$ and $d_\W(w,w')\le L_Pd_\X(x,y)$. Thus, $B$ is a closed subset of $\W\times \W$, hence Borel.

Moreover, $B_w=\set{w'\in \W:(w,w')\in B} \subset \set{w'\in \W:d_\W(w,w')\leq L_Pd(x,y)}$ is a closed subset of the closed ball in $\W$. Since $\W$ is assumed to be proper, closed balls are compact and thus $B_w$ is compact for any $w\in\W$. Thus, by the measurable section theorem, there exists a measurable $T:\W\ra\W$ such that $T(w)\in B_w$. In particular, $f(x,a,w) = f(y,a,T(w))$. 

Then, for any fixed $\psi\in\cU_\delta$ let $\psi':=T_\sharp\psi = \law_{W\sim\psi}(T(W)).$ Since $T$ is measurable, $\psi'$ is a probability measure on $\cW$. Moreover, by the change of measure formula, 
\begin{align*}
    \int d_\W(w,w_0)^p\psi'(dw) &= \int d_\W(T(w),w_0)^p\psi(dw)\\
    &\stackrel{(i)}\le
    \int [d_\W(w,w_0)+L_Pd_\X(x,y)]^p \psi(dw) \\
    &< \infty,
\end{align*}
where $(i)$ follows from $T(w)\in B_w$ and the triangle inequality, and the last step is from $\psi\in \cU_\delta\subset \cP_p(\cW)$. 
So $\psi'\in\cP_p(\cW)$. 

Moreover, treating $T$ as a transport map, we see that
\[
    \Wass_p(\psi,\psi')
    \le
    \sqbk{\int_\W d_\W(w,T(w))^p\psi(dw)}^{1/p}
    \le
    L_Pd_\X(x,y).
\] 

Finally, since $T(w)$ satisfies $f(x,a,w) = f(y,a,T(w))$ for all $w\in\W$, we have that
\[
    f(y,a,\cdot)_\sharp\psi' = f(y,a,T(\cd))_
\sharp\psi    =
    f(x,a,\cdot)_\sharp\psi.
\]

Together, we have verified all the requirements of $\psi'$ in Assumption \ref{assump:noise_law_lip} (i).

\paragraph{The CAU Case. }Similar to the CAA case, we fix $x,y\in\X$ and consider
\[
    B =
    \left\{
        (w,w')\in\W^2:
        f(x,a,w)=f(y,a,w') \ \forall a\in\A,\;d_\W(w,w')\le L_Pd_\X(x,y)
    \right\}.
\]
Again, we have $\pi_1(B)=\W.$ 

Also, as in the CAA case, $B$ is closed and hence Borel. Indeed, the distance constraint is closed. In addition, for each $a\in\A$, the set $\left\{(w,w')\in\W^2:f(x,a,w)=f(y,a,w')\right\}$
is closed by continuity of $f$. Hence $B$, being the intersection of these
closed sets over $a\in\A$ and the distance constraint, is closed.

Moreover, for each $w\in\W$,
\[
    B_w
    =
    \left\{
        w'\in\W:
        f(x,a,w)=f(y,a,w')\ \forall a\in\A,\
        d_\W(w,w')\le L_Pd_\X(x,y)
    \right\}
\]
is a closed subset of a compact 
ball, hence compact. By the measurable selection theorem, there exists a
Borel measurable map $T:\W\ra\W$ such that $T(w)\in B_w$ for every $w\in\W$.
Thus $f(x,a,w)=f(y,a,T(w))$ 
and $d_\W(w,T(w))\le L_Pd_\X(x,y)$ for every $a\in\A $ and $w\in\W,$

Next, for any fixed $\phi\in\cP(\cA)$ and $\psi\in\cU_\delta$ define $\psi':=T_\sharp\psi.$
Exactly as in the CAA case, the moment estimate gives
$\psi'\in\cP_p(\cW),$ and the coupling induced by the transport map $T$ gives
\[
    \Wass_p(\psi,\psi')
    \le
    \sqbk{\int_\W d_\W(w,T(w))^p\psi(dw)}^{1/p}
    \le
    L_Pd_\X(x,y).
\]

It remains to verify the next-state law matching. For every bounded measurable
$\varphi:\X\ra\R$, 
\[
\begin{aligned}
    \int_{\A\times\W}
        \varphi\bigl(f(y,a,w')\bigr)\,\phi(da)\psi'(dw')
    &=
    \int_{\A\times\W}
        \varphi\bigl(f(y,a,T(w))\bigr)\,\phi(da)\psi(dw)  \\
    &=
    \int_{\A\times\W}
        \varphi\bigl(f(x,a,w)\bigr)\,\phi(da)\psi(dw); 
\end{aligned}
\]
i.e., $f(y,\cdot,\cdot)_\sharp(\phi\otimes\psi')=    f(x,\cdot,\cdot)_\sharp(\phi\otimes\psi).$ This verifies Assumption
\ref{assump:noise_law_lip} (ii).
\end{proof}

\subsection{An Extension of Theorem \ref{thm:caa_cau_main}}
As introduced in Remark~\ref{rmk:law_matching}, the exact law-matching requirement can be relaxed while still preserving Lipschitz regularity. To illustrate this, we first define the following optimal transport cost
$$\Wass_\kappa(\mu,\nu):= \inf_{\pi\in\Pi(\mu,\nu)}\int_{\X\times\X} (\kappa(x) + \kappa(y))d_\X(x,y)\pi(dx,dy).$$

\begin{theorem}\label{thm:noise_relaxed}
Suppose Assumption \ref{assump:noise} holds and $\mrm{M}\in\{\mrm{CAA},\mrm{CAU}\}$. Suppose that there are constants $L_P$ and $L^*$ with $\gamma L^* < 1$ such that the following holds. For any $\delta>0$, $x,y\in\X$, $\phi\in\Phi_{\mrm{M}}$, and $\psi\in\cU_\delta$, there exists $\psi'\in\cP_p(\cW)$ with $\Wass_p(\psi,\psi')\le L_Pd_\X(x,y)$ such that
    \[
        \Wass_\kappa( f(x,\cdot,\cdot)_\sharp(\phi\times\psi) ,f(y,\cdot,\cdot)_\sharp(\phi\times\psi')) \leq L^*(\kappa(x) +\kappa(y)) d_\X(x,y).
    \]
Then the statement of Theorem \ref{thm:caa_cau_main} holds with the estimate in \eqref{eqn:noise_main_lip} replaced by 
\[
    [V_\delta^{\mrm{M}}]_{\kappa}
    \le
    \frac{1}{1-\gamma L^*}\crbk{L_r(1+L_P)
    +
    p2^{p-1}L_P
    \frac{R_\kappa+\gamma D_\kappa\bar V_\kappa}{\delta}}.
\]

\end{theorem}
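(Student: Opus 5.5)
The proof follows that of Theorem~\ref{thm:caa_cau_main}. Only the one-step Lipschitz estimate changes: exact law matching is replaced by a transport bound. Since the new bound depends on the seminorm of the input, the induction now tracks a recursive inequality for $[V_{n,\delta}^{\mrm{M}}]_\kappa$ rather than a uniform constant. The target one-step estimate is that, for continuous $u\in\Bkap^+$ with $[u]_\kappa<\infty$,
\[
    [\cT_\delta^{\mrm{M}}u]_{\kappa}
    \le
    L_r(1+L_P)
    +
    p2^{p-1}L_P\frac{R_\kappa+\gamma D_\kappa\norm{u}_\kappa}{\delta}
    +\gamma L^*[u]_\kappa .
\]

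To prove this, fix $x,y$ and $\phi\in\Phi_{\mrm{M}}$. Take an $\epsilon$-optimal $\psi\in\cU_\delta$ for $I_\delta(x,\phi)$, and let $\psi'$ be the law supplied by the hypothesis, so that $\psi'\in\cU_{\delta+\rho}$ with $\rho=L_Pd_\X(x,y)$.

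The reward terms are compared exactly as before, using a near-optimal $\Wass_1$ coupling of $\psi,\psi'$. This gives the contribution $L_r(1+L_P)(\kappa(x)+\kappa(y))d_\X(x,y)$.

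For the continuation terms, the identity \eqref{eqn:noise_continuation_identity} is no longer available. Instead, take a near-optimal coupling $\pi$ for $\Wass_\kappa$ between the two next-state laws $f(x,\cdot,\cdot)_\sharp(\phi\times\psi)$ and $f(y,\cdot,\cdot)_\sharp(\phi\times\psi')$. Since $|u(z)-u(z')|\le [u]_\kappa(\kappa(z)+\kappa(z'))d_\X(z,z')$,
\[
\int_\W\int_\A u(f(y,a,w))\phi(da)\psi'(dw)-\int_\W\int_\A u(f(x,a,w))\phi(da)\psi(dw)
\le [u]_\kappa L^*(\kappa(x)+\kappa(y))d_\X(x,y).
\]
This inequality requires both integrals to be finite, so that the difference may be written as an integral against $\pi$. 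The $x$-integral is finite by the $\epsilon$-optimality of $\psi$. For the $y$-integral, note that $u(z')\le u(z)+[u]_\kappa(\kappa(z)+\kappa(z'))d_\X(z,z')$. Integrating this against $\pi$, and using finiteness of the transport cost, bounds the $y$-integral by a finite quantity.

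Multiplying the continuation bound by $\gamma$ and adding the reward bound gives
\[
I_{\delta+\rho}(y,\phi)\le I_\delta(x,\phi)+\bigl[L_r(1+L_P)+\gamma L^*[u]_\kappa\bigr](\kappa(x)+\kappa(y))d_\X(x,y).
\]
Lemma~\ref{lemma:radius_sensitivity}, applied with center $\psi_0$ and $H=B\kappa(y)$, bounds $I_\delta(y,\phi)-I_{\delta+\rho}(y,\phi)$ by $p2^{p-1}B\kappa(y)\rho/\delta$. Combining the two, symmetrizing in $x$ and $y$, and taking the supremum over $\phi$ yields the displayed one-step estimate.

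For the induction, the monotonicity of the iterates, the bound $\norm{V^{\mrm{M}}_{n,\delta}}_\kappa\le\bar V_\kappa$, and the existence, fixed-point property and minimality of the limit are proved exactly as in Theorem~\ref{thm:caa_cau_main}. None of those steps used law matching. Write $\ell_n:=[V_{n,\delta}^{\mrm{M}}]_\kappa$ and let $C$ denote the bracketed constant in the statement. The one-step estimate gives $\ell_0=0$ and $\ell_{n+1}\le C+\gamma L^*\ell_n$. Since $\gamma L^*<1$, induction gives $\ell_n\le C/(1-\gamma L^*)$ for all $n$. As a consequence, every iterate is continuous, so the one-step estimate applies at each stage. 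Passing to the pointwise limit in the difference quotients, as in the earlier proofs, transfers the bound to $V_\delta^{\mrm{M}}$.

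I expect the main obstacle to be the continuation comparison. One must justify writing the difference of the two integrals as an integral against the $\Wass_\kappa$-coupling when $u$ is unbounded and the integrals are possibly infinite. The way around this is to first bound the $y$-side integral by the $x$-side integral plus the finite transport cost, and only then subtract.
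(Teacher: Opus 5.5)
Your proposal is correct and follows essentially the same route as the paper: the one-step estimate gains the term $\gamma L^*[u]_\kappa$ from bounding the continuation difference by $[u]_\kappa$ times the $\Wass_\kappa$ cost, and the recursion $\ell_{n+1}\le C+\gamma L^*\ell_n$ yields the uniform bound $C/(1-\gamma L^*)$, which passes to the monotone limit. Two points that the paper leaves implicit are handled explicitly in your version: you check that the $y$-side continuation integral is finite before subtracting, and you write out the recursion for the seminorms.
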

\begin{proof}
The result follows from the same proof as in Theorem \ref{thm:caa_cau_main}. The main difference is the replacement of \eqref{eqn:noise_one_step_inline} with the following Lipschitz bound. For $u\in\Bkap^+$ with $[u]_\kappa < \infty$,
\begin{equation}\label{eqn:noise_one_step_lip_relaxed}
    [\cT_\delta^{\mrm{M}}u]_{\kappa}
    \le
    L_r(1+L_P)
    +
    p2^{p-1}L_P
    \frac{R_\kappa+\gamma D_\kappa\norm{u}_\kappa}{\delta} + \gamma L^*[u]_\kappa.
\end{equation}

To see this, we follow the same proof with \eqref{eqn:noise_continuation_identity} modified as
\[
\begin{aligned}
    &\abs{\int_\W\int_\A u(f(y,a,w))\phi(da)\psi'(dw) -
    \int_\W\int_\A u(f(x,a,w))\phi(da)\psi(dw)}\\
    &\le
    [u]_\kappa \Wass_\kappa( f(x,\cdot,\cdot)_\sharp(\phi\times\psi) ,f(y,\cdot,\cdot)_\sharp(\phi\times\psi')) \\
    &\le [u]_\kappa L^*(\kappa(x) +\kappa(y)) d_\X(x,y)
\end{aligned}\]
by assumption. Thus, \eqref{eqn:tu_h_x_y_bd} now has an additional contribution from the $\gamma u\circ f$ term; i.e. 
\[
    \int_\W h_y^\phi(w)\psi'(dw)
    \le
    \int_\W h_x^\phi(w)\psi(dw)
    +
    (L_r(1+L_P) + \gamma L^*[u]_\kappa)(\kappa(x)+\kappa(y))d_\X(x,y).\]
Hence, \eqref{eqn:noise_one_step_lip_relaxed} follows from the same argument as in the proof of Theorem~\ref{thm:caa_cau_main}. 

To conclude Theorem~\ref{thm:noise_relaxed}, we note that the Picard iterates have finite weighted Lipschitz seminorms, and hence are continuous. They are also bounded in $\norm{\cdot}_\kappa$ and monotone nondecreasing. By induction, they satisfy the desired uniform Lipschitz bound. The same continuity argument then shows that the monotone limit of the Picard iterates is a fixed point of the Bellman equation and inherits the same Lipschitz bound.\end{proof}

\section{Conclusion}

Lipschitz regularity is a key input for efficient numerical methods and for understanding how discretization, approximation, estimation, and learning errors propagate through dynamic programming. We establish explicit weighted Lipschitz bounds for Wasserstein-robust value functions in the kernel-robust and exogenous-noise formulations.

The main qualitative message is that Wasserstein robustness can regularize the value function. Example~\ref{exmpl:holder} shows that the non-robust and KL-robust value functions need not be Lipschitz even when the primitives are Lipschitz. In contrast, under the assumptions studied here, every positive Wasserstein radius yields a locally Lipschitz minimal Bellman fixed point. The proof leverages Wasserstein radius-sensitivity estimates and the Picard iteration arguments in Sections~\ref{section:kernel} and~\ref{section:caa_cau}.

\subsection*{Acknowledgments}
J. Blanchet gratefully acknowledges support from DoD through the grant ONR 1398311, also support from NSF via grants 2229012, 2312204, and 2403007 is gratefully acknowledged.

\bibliographystyle{apalike}
\bibliography{ref}

\end{document}